\definecolor{darkred}{RGB}{100,0,0}
\definecolor{darkgreen}{RGB}{0,100,0}
\definecolor{darkblue}{RGB}{0,0,150}
\def\d{{\rm d}}
\newtheorem{theorem}{Theorem}[section]
\newtheorem{lem}[theorem]{Lemma}
\theoremstyle{remark}
\newtheorem{asp}{Assumption}
\def\beq{\begin{equation}} 
\def\eeq{\end{equation}}
\def\beqn{\begin{eqnarray*}}
\def\eeqn{\end{eqnarray*}}
\def\Bitem{\begin{itemize}\setlength{\itemsep}{.2in}}
\def\bitem{\begin{itemize}\setlength{\itemsep}{.05in}}
\def\eitem{\end{itemize}}
\def\Benum{\begin{enumerate}\setlength{\itemsep}{.2in}}
\def\benum{\begin{enumerate}\setlength{\itemsep}{.05in}}
\def\eenum{\end{enumerate}}
\def\bmult{\begin{multline*}}
\def\emult{\end{multline*}}
\def\bcenter{\begin{center}}
\def\ecenter{\end{center}}
\def\bframe{\begin{frame}}
\def\eframe{\end{frame}}
\newcommand{\thmref}[1]{Theorem~\ref{thm:#1}}
\newcommand{\lemref}[1]{Lemma~\ref{lem:#1}}
\newcommand{\secref}[1]{Section~\ref{sec:#1}}
\newcommand{\figref}[1]{Figure~\ref{fig:#1}}
\newcommand{\algref}[1]{Algorithm~\ref{alg:#1}}
\newcommand{\aspref}[1]{Assumption~\ref{asp:#1}}
\DeclareMathOperator*{\argmin}{arg\, min}
\def\cA{\mathcal{A}}
\def\cG{\mathcal{G}}
\def\cH{\mathcal{H}}
\def\cM{\mathcal{M}}
\def\cR{\mathcal{R}}
\def\cT{\mathcal{T}}
\def\cU{\mathcal{U}}
\def\cV{\mathcal{V}}
\def\cW{\mathcal{W}}
\newcommand{\E}{\operatorname{\mathbb{E}}}
\renewcommand{\P}{\operatorname{\mathbb{P}}}
\def\1{\mathbbm{1}}
\definecolor{purple}{rgb}{0.4,.1,.9}
\newcommand\blfootnote[1]{%
  \begingroup
  \renewcommand\thefootnote{}\footnote{#1}%
  \addtocounter{footnote}{-1}%
  \endgroup
}
\begin{document}
\thispagestyle{empty}

\title{Some Theory for Texture Segmentation}
\author{Lin Zheng}
\date{}
\maketitle

\blfootnote{The author is with the Department of Mathematics, University of California, San Diego, USA.  Contact information is available  \href{http://www.math.ucsd.edu/people/graduate-students/}{here}.}

\begin{abstract}
In the context of texture segmentation in images, and provide some theoretical guarantees for the prototypical approach which consists in extracting local features in the neighborhood of a pixel and then applying a clustering algorithm for grouping the pixel according to these features. 
On the one hand, for stationary textures, which we model with Gaussian Markov random fields, we construct the feature for each pixel by calculating the sample covariance matrix of its neighborhood patch and cluster the pixels by an application of k-means to group the covariance matrices. We show that this generic method is consistent. 
On the other hand, for non-stationary fields, we include the location of the pixel as an additional feature and apply single-linkage clustering. We again show that this generic and emblematic method is consistent. 
We complement our theory with some numerical experiments performed on both generated and natural textures.
\end{abstract}

\section{Introduction} \label{sec:intro}

Texture segmentation fits within the larger area of image segmentation, with a particular focus on images that contain textures. The goal, then, is to partition the image, i.e., group the pixels, into differently textured regions. 
Texture segmentation, and image segmentation more generally, is an important task in computer vision and pattern recognition, being widely applied to areas such as scene understanding, remote sensing and autonomous driving \citep{pal1993review, zhang2006advances, reed1993review, liu2019bow}.

At least in recent decades, texture segmentation methods are almost invariably based on extracting local features around each pixel, such as SIFT \citep{lowe1999object}, which are then fed into a clustering algorithm, such as k-means. 
An emblematic approach in this context is that of \cite{shi2000normalized}, who used the pixel value as feature, arguably the simplest possible choice, and applied a form of spectral clustering to group the pixels. 
The process is similar to what is done in the adjacent area of texture classification, the main difference being that a classification method is used instead of a clustering algorithm \citep{varma2004statistical, randen1999filtering}.

Although this basic approach has remained essentially unchanged, the process of extracting features has undergone some important changes over the years, ranging from the use of sophisticated systems from applied harmonic analysis such as Gabor filters or wavelets \citep{dunn1995optimal, grigorescu2002comparison, jain1990unsupervised, unser1995texture, weldon1996design, randen1999texture} to multi-resolution or multiscale aggregation approaches \citep{galun2003texture, mao1992texture}, among others \citep{malik2001contour, hofmann1998unsupervised}, to the use deep learning, particularly in the form convolutional neural networks (CNN), whose success is attributed to the capability of CNN to learn a hierarchical representation of raw input data \citep{long2015fully, ronneberger2015u, milletari2016v, badrinarayanan2017segnet}.
See \citep{humeau2019texture} for a recent survey.

While the vast majority of the work in texture segmentation, as in image processing at large, is applied, we contribute some theory by establishing the consistency of the basic approach described above. We do so in a stylized setting which is nonetheless a reasonable mathematical model for the problem of texture segmentation.
Markov random fields (MRF) are common models for textures \citep{cross1983markov, geman1986markov}, and arguably the most popular in theoretical texture analysis \citep{rue2005gaussian, arias2018detecting, verzelen2010adaptive, verzelen2010high, verzelen2009tests}. This is the model that we use.
Although supplanted by the more recent feature extraction methods mentioned above, which in recent years are invariably nonparametric, Gaussian MRF in particular remain the most commonly-used parametric model for textures, also used in the development of methodology not too long ago \citep{chellappa1985classification, zhu1998filters, manjunath1991unsupervised, paciorek2006spatial}. 
When textures are modeled by stationary Gaussian MRF, what characterizes them is the covariance structure, so that in congruence with adopting Gaussian MRF as models for textures, when assumed stationary the feature we extract is the (local) covariance.
When textures are not assumed stationary, we also incorporate location as an additional feature, as the covariance structure may change within a textured region. 

The basic approach calls for applying a clustering algorithm to the extracted features. Features are typically represented by (possibly high-dimensional) feature vectors, as is the case with the features that we work with, and thus a large number of clustering methods are applicable, some of them coming with theoretical guaranties such as k-means \citep{arthur2006k}, Gaussian mixture models \citep{dasgupta1999learning, vempala2004spectral, hsu2013learning}, hierarchical clustering \citep{dasgupta2005performance, dasgupta2010hierarchical}, including single-linkage clustering \citep{arias2011clustering}, and spectral clustering \citep{ng2002spectral}. 
In this paper, we use k-means in the context of stationary textures and singe-linkage clustering in the context of non-stationary textures. 

The paper is organized as follows. 
In \secref{stationary}, we consider stationary textures, which is done by the extraction of local second moment information on patches and the application of k-means. 
In \secref{non-stationary}, we consider non-stationary textures, where we also include location as a feature and we apply instead single-linkage clustering. 
In \secref{numerics}, we present the result of some numerical experiments, mostly there to illustrate the theory developed in the main part of the paper. Both synthetic and natural textures are considered.

\section{Stationary Textures}
\label{sec:stationary}

In this section we consider textures to be stationary. The model we adopt and the method we implement are introduced in \secref{stationary_model} and \secref{stationary_method}. We then establish in \secref{stationary_theory} the consistency of a simple incarnation of the basic approach.

\subsection{Model}
\label{sec:stationary_model}

We have a pixel image $X$ of size $n \times n$, that we assume is partitioned into two sub-regions $\cG_0$ and $\cG_1$ by curve $\bar{\cG}$. $\cG_0$ is a stationary Gaussian Markov random field with mean $0$ and autocovariance matrix $A_0$. $\cG_1$ is a stationary Gaussian Markov random field with mean $0$ and autocovariance matrix $A_1$. In image $X$, we pick up $n^2$ pixels with equal intervals, and get observations
\begin{equation}
\{X_{t}\},\;\;\;\;\; t \in  \cT := \{1,2,\cdots, n\}^2.
\end{equation}
To estimate curve $\bar{\cG}$, we need to cluster the $n^2$ pixels into two groups.

\subsection{Methods}
\label{sec:stationary_method}
We define scanning patches as follows. To simplify the presentation assume $n$ is the square of an integer (namely $n = m^2$ for some integer m). For $\forall t = (t_1,t_2) \in \cT$, pick up patch $S_t$ with size $(2m+1)\times (2m+1)$,
\begin{equation}
\label{patch}
S_t=
\begin{pmatrix}
   X_{t+(-m,-m)} & X_{t+(-m, -m+1)} & \cdots & X_{t+(-m,m)} \\
   X_{t+(-m+1,-m)} & X_{t+(-m+1,-m+1)} & \cdots & X_{t+(-m+1,m)} \\
   \vdots & \vdots & \cdots& \vdots \\
   X_{t+(m,-m)} & X_{t+(m,-m+1)} & \cdots& X_{t+(m,m)}
\end{pmatrix},
\end{equation}
Next, autocovariance is defined based on scanning patches. For $\forall t = (t_1, t_2) \in \cT$ and $ \forall i=(i_1,i_2) \in \cM :=\{-m, -m+1, \cdots, m-1, m\}^2$, define true autocovariance and sample autocovariance as follows

\begin{equation}
C_t(i) = \text{Mean of }\{ \E(X_t \cdot X_{t+i})\;|\; \text{both} \;X_{t}\; \text{and} \;X_{t+i}\; \text{are in} \;S_t\},
\end{equation}
\begin{equation} 
\hat{C}_t(i) = \text{Mean of }\{ X_t \cdot X_{t+i}\;|\; \text{both} \;X_{t}\;\text{and} \;X_{t+i}\; \text{are in} \;S_t\}.
\end{equation}
Denote the vectorizations of $\{C_t(i)\}_{i \in \cM}$ and $\{\hat{C}_t(i)\}_{i \in \cM}$ to be $C_t$ and $\hat{C}_t$ respectively. Here $C_t$ is the true feature of pixel $X_t$ and $\hat{C}_t$ is the observed feature of pixel $X_t$. 

Also based on scanning patches, we define following three sets
\begin{equation}
\cH_0 = \{ t \in  \cT \;|\; S_t \subset \cG_0 \},
\end{equation}
\begin{equation}
\cH_1 = \{ t \in  \cT \;|\; S_t \subset \cG_1 \},
\end{equation}
\begin{equation}
\cH = \{ t \in  \cT \;|\; S_t \cap  \cG_0 \neq \emptyset \;\text{and} \; S_t \cap  \cG_1 \neq \emptyset \}.
\end{equation}
Here $\cG_0$ and $\cG_1$ are both stationary fields, so all elements in set $\{C_t\}_{\forall t \in \cH_0}$ are the same and we denote it as $C^0$. Similarly, all elements in set $\{C_t\}_{\forall t \in \cH_1}$ are the same and we denote it as $C^1$. Define template autocovariance $E = 
 \begin{pmatrix}
   C^0 \\
   C^1
  \end{pmatrix}$.
  
Then we introduce membership matrix.
Define $n^2 \times 2$ true membership matrix $W$ such that for $\forall t = (t_1, t_2) \in \cT$,
\begin{equation}
W_t = \text{the} \; n(t_1-1)+t_2\; th \;\text{row of matrix} \; W =
\left\{
\begin{array}{lr}
    (1,0), &  \text{if} \;\;t=(t_1,t_2) \in \cG_0,\\
   (0,1),  & \text{if}\;\; t=(t_1,t_2) \in \cG_1.
\end{array}
\right.
\end{equation}
Also define the set of membership matrices $\cW_{n,2}$ as follows
 \begin{equation}
\cW_{n,2} = \{n^2 \times 2 \;\text{matrices with rows}\; (0,1) \;\text{or}\; (1,0)\}.
\end{equation}
Based on above calculations and definitions, we define k-means clustering estimation as
\begin{equation}
(\hat{W}, \hat{E}) = \argmin_{W \in \cW_{n,2}, E\in \mathbb{R}_{2\times (2m+1)^2}}\sum_{t \in \cT}\|(WE)_{t}-\hat{C}_{t}\|^2_{\infty},
\end{equation}
where $(WE)_t$ is the $n(t_1-1)+t_2\; th$ row of matrix $WE$. 

In practice k-means can not be solved exactly, however, there exists polynomial time algorithm which obtains approximation ($\hat{W}, \hat{E}$) satisfying following equation \eqref{11}, such as $(1 +\epsilon)$-approximate method in \citep{kumar2004simple}.
\begin{equation}
\sum_{t \in \cT}\|(\hat{W}\hat{E})_{t}-\hat{C}_{t}\|^2_{\infty} \leq (1+\epsilon)\cdot \min_{W \in \cW_{n,2}, E\in \mathbb{R}_{2\times (2m+1)^2}}\; \sum_{t \in \cT}\|(WE)_{t}-\hat{C}_{t}\|^2_{\infty},\label{11}
\end{equation}
where $\hat{W} \in \cW_{n,2}$ and $\hat{E}\in \mathbb{R}_{2\times (2m+1)^2}$. Thus we cluster the $n^2$ pixels into two groups by membership matrix estimation $\hat{W}$. As a summary, we provide the procedure of
k-means algorithm in \algref{alg1}.
\begin{algorithm}[H]
  \caption{Texture Segmentation with K-means Algorithm}
  \label{alg:alg1}
  \begin{algorithmic}[1]
    \REQUIRE
      Observations $\{X_t\}_{t\in \cT}$, approximation error $\epsilon$.
    \ENSURE
      Membership matrix estimation $\hat{W}$.
    \STATE 
    For $\forall t = (t_1,t_2) \in \cT$, pick up patch $S_t$.
    \STATE
    Calculate sample autocovariance $\{\hat{C}_t(i)\}_{i \in \cM}$ and obtain observed features $\{\hat{C}_t\}_{t\in \cT}$.
    \STATE
    Define template autocovariance $E$ and the set of membership matrices $\cW_{n,2}$.
    \STATE
    Obtain k-means approximation solution ($\hat{W}, \hat{E}$) which satisfies \eqref{11}.
    \RETURN $\hat{W}$.
  \end{algorithmic}
\end{algorithm}

Define the set of $2\times 2$ permutation matrices
\begin{equation}
\Phi_2=\left\{\begin{pmatrix}
   1 & 0 \\
   0 & 1
  \end{pmatrix},\begin{pmatrix}
   0 & 1 \\
   1 & 0
  \end{pmatrix}\right\},
\end{equation}
then calculate
\begin{equation}
\hat{Q} = \argmin_{Q\in \Phi_2} \;\sum_{t\in \cT}\|(\hat{W}Q)_t-W_t\|^2_\infty.
\end{equation}
Next, define the set of mistakenly clustered elements to be $\cR$ as follows
\begin{equation}
\cR = \{t\in \cT: (\hat{W}\hat{Q})_t \neq W_t\},
\end{equation}
then clustering error rate is
\begin{equation}
|\cR|/n^2 = \frac{1}{n^2} \sum_{t\in \cT}\|(\hat{W}\hat{Q})_t-W_t\|^2_\infty.
\end{equation}

\subsection{Theory}
\label{sec:stationary_theory}
Firstly we introduce following assumptions.

\begin{asp}
\label{asp:stationary}
Both $\cG_0$ and $\cG_1$ are wide-sence stationary Gaussian Markov random fields.
\end{asp}

\begin{asp}
\label{asp:difference}
Let $\Delta = \|C^0-C^1\|_{\infty}$. For $\forall \beta >1$,
\begin{equation}
\frac{(\log n)^\beta}{\Delta^2 n}\rightarrow 0 \;\;\;\;\text{as}  \;\;\;\;n\rightarrow \infty.
\end{equation}
\end{asp}

\begin{asp}
\label{asp:curve} Define 
$C_{t0}=C^0$ for $\forall t \in \cG_0$ and $C_{t0}=C^1$ for $\forall t \in \cG_1$. With the same $\beta$ in \aspref{difference},
\begin{equation}
\sum_{t\in \cH}\|C_t-C_{t0}\|^2_{\infty}\leq \frac{n (\log n)^\beta}{24}.
\end{equation}
\end{asp}

Next, before introducing the theory, we indicate the error bound of $\|\hat{C}_t- C_t\|_{\infty}$.

\begin{lem}
\label{lem:consistency}
Under \aspref{stationary}, for $\forall t \in \cT$ and $\forall a >0$, there exists a constant $J$ such that
\begin{equation} \P(\|\hat{C}_t- C_t\|_{\infty}>a) \leq 2(2\sqrt{n}+1)^2 \exp(-J\cdot a^2n).
\end{equation}
\end{lem}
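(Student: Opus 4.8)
The plan is to peel off the $\ell_\infty$ norm by a union bound over the $(2m+1)^2$ lags $i\in\cM$, and to treat each coordinate $\hat C_t(i)-C_t(i)$ as a centered quadratic form in a Gaussian vector, to which a Hanson--Wright-type tail bound applies; this is the random-field analogue of the classical concentration of sample autocovariances of a stationary Gaussian process. Fix $t\in\cT$ and a lag $i=(i_1,i_2)\in\cM$ and write $\hat C_t(i)-C_t(i)=\frac1{N_i}\sum_{j}\big(X_{t+j}X_{t+j+i}-\E[X_{t+j}X_{t+j+i}]\big)$, the sum being over the $N_i=(2m+1-|i_1|)(2m+1-|i_2|)$ indices $j$ with $j,j+i\in\cM$; by linearity of expectation $C_t(i)$ is exactly the average of the $\E[X_{t+j}X_{t+j+i}]$, so there is no bias term. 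Since $|i_1|,|i_2|\le m$ and $n=m^2$ we get the uniform lower bound $N_i\ge(m+1)^2\ge n$, which is the source of the factor $n$ in the exponent. Letting $Z=\mathrm{vec}(S_t)$, a centered Gaussian vector of dimension $(2m+1)^2$ with covariance $\Sigma_t$, we have $\sum_j X_{t+j}X_{t+j+i}=Z^\top B_iZ$ where $B_i$ is the symmetrized lag-$i$ shift matrix, for which $\|B_i\|_{\mathrm{op}}\le1$ and $\|B_i\|_{\mathrm F}^2\le N_i$, and $\E[Z^\top B_iZ]=N_iC_t(i)$.

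Write $Z=\Sigma_t^{1/2}g$ with $g$ standard Gaussian and $M=\Sigma_t^{1/2}B_i\Sigma_t^{1/2}$. The Hanson--Wright inequality (equivalently, a Bernstein bound for the generalized $\chi^2$ obtained from $B_i$ by polarization) gives $\P(|Z^\top B_iZ-\E Z^\top B_iZ|>u)\le 2\exp\!\big(-c\min(u^2/\|M\|_{\mathrm F}^2,\,u/\|M\|_{\mathrm{op}})\big)$, and $\|M\|_{\mathrm{op}}\le\|\Sigma_t\|_{\mathrm{op}}$, $\|M\|_{\mathrm F}^2\le\|\Sigma_t\|_{\mathrm{op}}^2N_i$. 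The one structural fact I would invoke — and what the Markov assumption supplies — is that $\|\Sigma_t\|_{\mathrm{op}}\le\lambda$ for a constant $\lambda<\infty$ independent of $n$: for a stationary GMRF the covariance operator is Toeplitz with symbol the spectral density $1/q$, where $q>0$ is the trigonometric polynomial determined by the sparse, fixed precision, so every finite principal submatrix has operator norm at most $\sup 1/q<\infty$ (alternatively one may simply record this uniform bound inside \aspref{stationary}). Taking $u=aN_i$ and using $N_i\ge n$ yields $\P(|\hat C_t(i)-C_t(i)|>a)\le 2\exp(-Ja^2n)$ with $J=c\min(\lambda^{-2},(\lambda a)^{-1})>0$ (which may be taken uniform over $a$ in any bounded interval), and a union bound over the $|\cM|=(2m+1)^2=(2\sqrt n+1)^2$ lags gives $\P(\|\hat C_t-C_t\|_\infty>a)\le 2(2\sqrt n+1)^2\exp(-Ja^2n)$.

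The main obstacle is this uniform operator-norm bound on $\Sigma_t$: one must check that the stationary GMRF has spectral density bounded independently of the grid size, and — because the lemma is asserted for every $t\in\cT$, including $t\in\cH$ whose patch straddles the interface $\bar{\cG}$ — that the joint law of such a mixed patch is still Gaussian with covariance operator norm controlled by the maximum of the two single-field bounds. The remaining ingredients (the quadratic-form representation, the count of $N_i$, the union bound) are routine; the only genuine bookkeeping point is to keep the exponent proportional to $a^2n$ rather than to $a^2N_i$ with its lag-dependent $N_i$, which is why the lower bound $N_i\ge n$ is isolated at the outset.
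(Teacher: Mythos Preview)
Your argument is essentially the paper's: write $\hat C_t(i)$ as a normalized Gaussian quadratic form in the vectorized patch, apply Hanson--Wright, bound the operator and Frobenius norms of the relevant matrix, and finish with a union bound over the $(2m+1)^2$ lags. The paper diagonalizes $\Sigma$ first (setting $Y_t=U^\top S_t^v$) and applies Hanson--Wright to $Y_t^\top(U^\top A_iU)Y_t$, whereas you equivalently push $\Sigma_t^{1/2}$ onto the matrix and work with $M=\Sigma_t^{1/2}B_i\Sigma_t^{1/2}$; the two are the same computation up to relabeling.

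One point worth flagging: you are more scrupulous than the paper about the dependence on $\|\Sigma_t\|_{\mathrm{op}}$. The paper simply declares the sub-Gaussian norm $K_1$ of the components of $Y_t\sim N(0,\Lambda)$ to be ``a constant,'' but of course $K_1^2$ scales with $\lambda_{\max}(\Sigma)$, so the uniform-in-$n$ bound you isolate (via the bounded spectral density of the stationary GMRF) is exactly what is being used implicitly there. Likewise, your remark about patches straddling $\bar{\cG}$ is a genuine issue the paper does not address: the proof as written opens with ``since the field is stationary, suppose $S_t^v\sim N(0,\Sigma)$,'' which only covers $t\in\cH_0\cup\cH_1$, yet the lemma is stated for all $t\in\cT$. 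Your proposed fix (joint Gaussianity of the mixed patch with operator norm controlled by the worse of the two fields) is the right patch; this requires an assumption on how the two fields are coupled, which the paper does not make explicit.
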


\begin{proof}
First let $S_t^v$ be the vectorization of $S_t$, then $S_t^v$ is a vector of length $(2m+1)^2$
\begin{equation}
S_t^v=
 \begin{pmatrix}
   X_{t+(-m,-m)} \\
   \vdots \\
   X_{t+(-m,m)} \\
   X_{t+(-m+1,-m)} \\
   \vdots\\
   X_{t+(-m+1,m)} \\
   \vdots \\
   \vdots \\
   \vdots \\
    X_{t+(m,-m)} \\ 
    \vdots\\
    X_{t+(m,m)}
  \end{pmatrix}.
\end{equation}
Then for $\forall i=(i_1,i_2) \in \cM$, there exists a matrix $A_i$ such that
\begin{equation}
\hat{C}_t(i) = \frac{1}{(2m+1-|i_1|)(2m+1-|i_2|)} (S_t^v)^TA_iS_t^v,
\end{equation}
where $A_i$ is a $(2m+1)^2\times (2m+1)^2$ matrix with elements $0$ and $1$, and the number of 1 is less than $(2m+1-|i_1|)(2m+1-|i_2|)$.

Since the field is stationary, suppose $S_t^v \sim N(0, \Sigma)$, where $\Sigma$ is non-negative. Let $\Sigma = U\Lambda U^T$ be the spectral decomposition of $\Sigma$. Define 
\begin{equation}
Y_t = U^TS_t^v,
\end{equation}
then 
\begin{equation}
Y_t \sim N(0, U^T\Sigma U) = N(0, \Lambda).
\end{equation}
So 
\begin{align}
\hat{C}_t(i) &=  \frac{1}{(2m+1-|i_1|)(2m+1-|i_2|)} (S_t^v)^TA_iS_t^v \\
&=  \frac{1}{(2m+1-|i_1|)(2m+1-|i_2|)} (UY_t)^TA_i(UY_t)\\
&= 
\frac{1}{(2m+1-|i_1|)(2m+1-|i_2|)} Y_t^T(U^TA_iU)Y_t.
\end{align}
By Hanson-Wright inequality in
\citet*{rudelson2013hanson}, for $ \forall a>0$, there exist constants $K_1$ and $J_1$, such that
\begin{align}
&\P(|\hat{C}_t(i)- C_t(i)]|>a)\\
&= \P(|Y_t^T(U^TA_i U)Y_t- \E[Y_t^T(U^TA_i U)Y_t]|>a\cdot (2m+1-|i_1|)(2m+1-|i_2|))\\
&\leq 2 \exp\Big(-J_1\cdot \min\Big\{\frac{a^2(2m+1-|i_1|)^2(2m+1-|i_2|)^2}{K_1^4\|U^TA_i U\|^2_F}, \frac{a(2m+1-|i_1|)(2m+1-|i_2|)}{K_1^2\|U^TA_i U\|_2}\Big\}\Big).
\end{align}
Next we focus on $\|U^T A_i U\|_F$ and $\|U^T A_i U\|_2$. Since $U$ is an orthogonal matrix, 
\begin{align}
\|U^TA_i U\|_2 =  \|A_i\|_2 \leq \|A_i\|_1 
= \max_k{\sum}_{l=1}^{(2m+1)^2}|A_i(k,l)| = 1
\end{align}
and
\begin{align}
\|U^T A_i U\|_F &=  \sqrt{\text{Sum of eigenvalues of } (U^T A_i U)^T(U^T A_i U)}\\
&\leq \sqrt{(2m+1)^2 \cdot \lambda_{max}((U^TA_i U)^T(U^TA_i U))}\\
&= \sqrt{(2m+1)^2} \cdot \|U^TA_i U\|_2\\
&= 2m+1.
\end{align}
Then for $\forall i=(i_1,i_2) \in \cM$, there exist constants $K_1$, $J_1$ and $J$ such that
\begin{align}
&\P(|\hat{C}_t(i)- C_t(i)]|>a) \\
&\leq 2 \exp\Big(-J_1\cdot \min\Big\{\frac{a^2(2m+1-|i_1|)^2(2m+1-|i_2|)^2}{K_1^4(2m+1)^2}, \frac{a(2m+1-|i_1|)(2m+1-|i_2|)}{K_1^2}\Big\}\Big)\\
&\leq 2 \exp(-J\cdot \min\{a^2m^2, am^2\}).
\end{align}
So when $a$ is small enough, we have
\begin{equation}
\P(|\hat{C}_t(i)- C_t(i)]|>a) 
\leq 2 \exp(-J\cdot a^2m^2).
\end{equation}
Next since
\begin{equation}
\|\hat{C}_t-C_t\|_{\infty} = \max_{i \in \cM}|\hat{C}_t(i)- C_t(i)|,
\end{equation}
by Union bound, for $\forall a>0$,
\begin{align}
\P(\|\hat{C}_t- C_t\|_{\infty}>a) &\leq 2(2m+1)^2 \exp(-J\cdot a^2m^2)\\
&= 2(2\sqrt{n}+1)^2 \exp(-J\cdot a^2n).
\end{align}
\end{proof}

\begin{lem}
\label{lem:cluster}
Let $\Delta = \|C^0-C^1\|_{\infty}$. Define $\cA_k = \{t \in \cG_k : \|(\hat{W}\hat{E})_t-C^k\|_{\infty} \geq \Delta/2 \}, k= 0,1$, and $\cA' = \cA_0 \cup \cA_1$, we have $\cT \backslash \cA'=(\cG_0 \backslash \cA_0)\cup (\cG_1 \backslash \cA_1) $. Then all the elements in $\cT \backslash \cA'$ are clustered correctly.
\end{lem}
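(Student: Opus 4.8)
The plan is to prove this as a purely deterministic statement, with a one‑line triangle inequality at its heart; all randomness enters only later, through the control of $|\cA'|$. The strategy splits into two parts: (i) show that $\hat W$ already separates $\cG_0\setminus\cA_0$ from $\cG_1\setminus\cA_1$ correctly \emph{as an unlabeled partition}, and (ii) show that the permutation $\hat Q$ is the one aligning this partition with the true labels.

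For part (i), fix $t\in\cT\setminus\cA'=(\cG_0\setminus\cA_0)\cup(\cG_1\setminus\cA_1)$. By the definition of $\cA_k$, if $t\in\cG_k\setminus\cA_k$ then $\|(\hat W\hat E)_t-C^k\|_\infty<\Delta/2$, i.e.\ the center $\hat W$ assigns to $t$ is within $\Delta/2$ of the correct template in sup‑norm. The key claim is that no pixel of $\cG_0\setminus\cA_0$ and no pixel of $\cG_1\setminus\cA_1$ can be assigned the same center: if $t\in\cG_0\setminus\cA_0$, $s\in\cG_1\setminus\cA_1$ and $\hat W_t=\hat W_s$, then $(\hat W\hat E)_t=(\hat W\hat E)_s=:v$, so $\Delta=\|C^0-C^1\|_\infty\le\|C^0-v\|_\infty+\|v-C^1\|_\infty<\Delta/2+\Delta/2=\Delta$, a contradiction. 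Since each row of $\hat W$ is $(1,0)$ or $(0,1)$, ``same center'' means literally $\hat W_t=\hat W_s$, so as long as both $\cG_0\setminus\cA_0$ and $\cG_1\setminus\cA_1$ are nonempty this forces $\hat W$ to use one label throughout $\cG_0\setminus\cA_0$ and the other throughout $\cG_1\setminus\cA_1$; hence there is a unique $Q_\star\in\Phi_2$ with $(\hat W Q_\star)_t=W_t$ for every $t\in\cT\setminus\cA'$.

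For part (ii), I would use that every summand $\|(\hat W Q)_t-W_t\|_\infty^2$ is $0$ or $1$ (both rows are standard basis vectors). On one hand, $\sum_{t\in\cT}\|(\hat W Q_\star)_t-W_t\|_\infty^2\le|\cA'|$ by part (i); on the other hand, for the other permutation $Q'\in\Phi_2$ one has $(\hat W Q')_t\neq(\hat W Q_\star)_t$ for every $t$, so $\sum_{t\in\cT}\|(\hat W Q')_t-W_t\|_\infty^2=n^2-\sum_{t\in\cT}\|(\hat W Q_\star)_t-W_t\|_\infty^2\ge n^2-|\cA'|$. In the regime $|\cA'|<n^2/2$, $Q_\star$ is then the strict minimizer, so $\hat Q=Q_\star$ and $(\hat W\hat Q)_t=W_t$ for all $t\in\cT\setminus\cA'$, i.e.\ $\cT\setminus\cA'\subseteq\cT\setminus\cR$, which is the claim.

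The computation itself is short; the two points needing care are that the conclusion is phrased relative to the \emph{optimal} $\hat Q$ and that the triangle‑inequality step assumes $\cG_0\setminus\cA_0$ and $\cG_1\setminus\cA_1$ are nonempty. Both are handled by the fact that $|\cA'|=o(n^2)$ in the setting of interest — that estimate simultaneously guarantees the two sets are far from empty and that $|\cA'|<n^2/2$ — and it is exactly this estimate that the subsequent lemmas derive from \aspref{difference}, \aspref{curve} and the tail bound on $\|\hat C_t-C_t\|_\infty$. So the main obstacle is not in this lemma but in keeping its bookkeeping consistent with the later bound on $|\cA'|$, being careful that ``clustered correctly'' means ``not in $\cR$'' with $\cR$ taken through the optimal $\hat Q$.
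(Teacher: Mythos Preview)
Your proof is correct and its core is exactly the paper's: the same triangle-inequality contradiction shows cross-group pixels in $\cT\setminus\cA'$ receive distinct centers, and the two-label constraint then forces each $\cG_k\setminus\cA_k$ to be uniform. You go a step further than the paper by explicitly arguing in part~(ii) that $\hat Q=Q_\star$ once $|\cA'|<n^2/2$; the paper's proof stops at the unlabeled-partition level and only implicitly invokes $|\cR|\le|\cA'|$ in the subsequent theorem, so your treatment of ``clustered correctly'' relative to the optimal permutation is in fact more careful than the original.
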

\begin{proof}
On the one hand, for $\forall t\in \cG_0 \backslash \cA_0$ and $\forall s \in \cG_1 \backslash \cA_1$, by contradiction, if $(\hat{W}\hat{E})_t= (\hat{W}\hat{E})_s$,
\begin{align}
\Delta = \|C^0-C^1\|_{\infty} &\leq  \|C^0-(\hat{W}\hat{E})_t\|_{\infty}+ \|(\hat{W}\hat{E})_t- (\hat{W}\hat{E})_s\|_{\infty}+\|(\hat{W}\hat{E})_s-C^1\|_{\infty}\\
&< \Delta/2+0+\Delta/2\\
&= \Delta,
\end{align}
which is conflicted by itself, so $(\hat{W}\hat{E})_t \neq (\hat{W}\hat{E})_s$. On the other hand, suppose $t,s \in \cG_0 \backslash \cA_0$ or $t,s \in \cG_1 \backslash \cA_1$,  by contradiction, if $(\hat{W}\hat{E})_t \neq (\hat{W}\hat{E})_s$, $\hat{W}\hat{E}$ has at least three distinct rows, however, according to the structure of $\hat{W}\hat{E}$, it has exactly two distinct rows, which is a conflict. So $(\hat{W}\hat{E})_t=  (\hat{W}\hat{E})_s$.
Thus, all the elements in $\cT \backslash \cA'$ are clustered correctly.
\end{proof}

Next, we introduce the theory for k-means clustering algorithm.
\begin{theorem}
Under Assumption~\ref{asp:stationary}, \ref{asp:difference} and \ref{asp:curve}, consider k-means clustering in \algref{alg1}. For $\forall \beta>1$, there exists a constant $J$, as $n \rightarrow \infty$, 
\begin{align}
\P\Big(|\cR|/n^2 >\frac{(\log n)^{\beta}}{\Delta^2 n}\Big)
&\leq 2(2\sqrt{n}+1)^2n^2 \exp(-J\cdot (\log n)^{\beta}) \rightarrow 0,
\end{align}
where $|\cR|/n^2 $ is the clustering error rate. Here $\frac{(\log n)^{\beta}}{\Delta^2 n} \rightarrow 0$ as $n \rightarrow \infty$.
\end{theorem}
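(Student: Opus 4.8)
The plan is to combine the two preceding lemmas: Lemma~\ref{lem:cluster} reduces the clustering error rate to the size of $\cA' = \cA_0 \cup \cA_1$, Lemma~\ref{lem:consistency} is the only place randomness enters, and in between one bounds a $k$-means deviation by a deterministic piece (controlled by Assumption~\ref{asp:curve}) plus a stochastic piece (controlled by Lemma~\ref{lem:consistency}). Concretely, Lemma~\ref{lem:cluster} gives $\cR \subseteq \cA'$, so $|\cR| \le |\cA_0| + |\cA_1| = |\cA'|$. Since $\cA_k \subseteq \cG_k$ and $C_{t0} = C^k$ on $\cG_k$, every $t \in \cA_k$ satisfies $\|(\hat W\hat E)_t - C_{t0}\|_\infty = \|(\hat W\hat E)_t - C^k\|_\infty \ge \Delta/2$, so summing over $k$,
\[
|\cR|\,\frac{\Delta^2}{4} \;\le\; |\cA'|\,\frac{\Delta^2}{4} \;\le\; \sum_{t\in\cT}\big\|(\hat W\hat E)_t - C_{t0}\big\|_\infty^2 ,
\]
and it suffices to bound the right-hand side by $\tfrac14 n(\log n)^\beta$ with high probability (then $|\cA'|\le n(\log n)^\beta/\Delta^2$, which is $o(n^2)$ by Assumption~\ref{asp:difference}).

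To bound that sum I would compare $(\hat W,\hat E)$ against the \emph{true} pair $(W,E)$, with $W$ the true membership matrix and $E$ the template, which is feasible for the $k$-means problem and has $(WE)_t = C_{t0}$. Writing $(\hat W\hat E)_t - C_{t0} = \big((\hat W\hat E)_t - \hat C_t\big) + \big(\hat C_t - C_{t0}\big)$, Minkowski's inequality in $\ell^2(\cT)$ together with the near-optimality \eqref{11} (applied with competitor $(W,E)$, for which $(WE)_t - \hat C_t = C_{t0} - \hat C_t$) yields
\[
\Big(\sum_{t\in\cT}\big\|(\hat W\hat E)_t - C_{t0}\big\|_\infty^2\Big)^{1/2} \le \big(1+\sqrt{1+\epsilon}\,\big)\Big(\sum_{t\in\cT}\big\|\hat C_t - C_{t0}\big\|_\infty^2\Big)^{1/2}.
\]
Then I would split $\cT = \cH_0 \cup \cH_1 \cup \cH$: on $\cH_0 \cup \cH_1$ the patch lies in one region, so $C_t = C_{t0}$ and hence $\sum_{t\in\cT}\|C_t - C_{t0}\|_\infty^2 = \sum_{t\in\cH}\|C_t - C_{t0}\|_\infty^2 \le n(\log n)^\beta/24$ by Assumption~\ref{asp:curve}; a second application of Minkowski to $\hat C_t - C_{t0} = (\hat C_t - C_t) + (C_t - C_{t0})$ gives
\[
\Big(\sum_{t\in\cT}\big\|(\hat W\hat E)_t - C_{t0}\big\|_\infty^2\Big)^{1/2} \le \big(1+\sqrt{1+\epsilon}\,\big)\Big[\Big(\sum_{t\in\cT}\big\|\hat C_t - C_t\big\|_\infty^2\Big)^{1/2} + \big(\tfrac{n(\log n)^\beta}{24}\big)^{1/2}\Big].
\]

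For the stochastic term, fix $a^2 = c_0(\log n)^\beta/n$ with a small constant $c_0$ and let $\cB = \{\,\|\hat C_t - C_t\|_\infty \le a \text{ for all } t\in\cT\,\}$, so on $\cB$ one has $\sum_t\|\hat C_t - C_t\|_\infty^2 \le n^2 a^2 = c_0\, n(\log n)^\beta$. Choosing $\epsilon$ and $c_0$ small enough that $\big(1+\sqrt{1+\epsilon}\,\big)\big(\sqrt{c_0} + 1/\sqrt{24}\,\big) \le 1/2$ — for which it suffices that $(1+\sqrt{1+\epsilon})^2 < 6$ and then $c_0$ is shrunk — the last display yields $\sum_t\|(\hat W\hat E)_t - C_{t0}\|_\infty^2 \le n(\log n)^\beta/4$ on $\cB$, hence $|\cR|/n^2 \le |\cA'|/n^2 \le (\log n)^\beta/(\Delta^2 n)$ on $\cB$. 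Therefore $\{\,|\cR|/n^2 > (\log n)^\beta/(\Delta^2 n)\,\} \subseteq \cB^c$, and Lemma~\ref{lem:consistency} with a union bound over the $n^2$ pixels gives
\[
\P\!\Big(|\cR|/n^2 > \tfrac{(\log n)^\beta}{\Delta^2 n}\Big) \;\le\; \P(\cB^c) \;\le\; n^2\cdot 2(2\sqrt n+1)^2\exp(-J_0 a^2 n) \;=\; 2(2\sqrt n+1)^2 n^2\exp\!\big(-J_0 c_0(\log n)^\beta\big),
\]
which is the asserted bound with $J := J_0 c_0$ (where $J_0$ is the constant from Lemma~\ref{lem:consistency}); it tends to $0$ because $\log\!\big(2(2\sqrt n+1)^2 n^2\big) = O(\log n)$ while $(\log n)^{\beta-1}\to\infty$ for $\beta>1$, and Assumption~\ref{asp:difference} ensures the threshold itself vanishes.

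The individual ingredients are short: Lemma~\ref{lem:cluster} supplies the combinatorial reduction, the near-optimality \eqref{11} the comparison with the template, and Lemma~\ref{lem:consistency} the exponential tail. The one delicate point — and the step I expect to be the main obstacle — is the constant accounting in the two Minkowski steps: one must check that, after absorbing the $(1+\epsilon)$ slack, the deterministic contribution from Assumption~\ref{asp:curve} stays strictly below $\tfrac14 n(\log n)^\beta$ and leaves a positive absolute-constant margin into which the stochastic term fits for a suitable choice of $a_n$. This is precisely where the constant $1/24$ in Assumption~\ref{asp:curve} is used, and it forces the mild restriction $(1+\sqrt{1+\epsilon})^2 < 6$ on the approximation parameter $\epsilon$.
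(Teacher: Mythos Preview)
Your proposal is correct and follows essentially the same route as the paper: reduce $|\cR|$ to $|\cA'|$ via Lemma~\ref{lem:cluster}, bound $\sum_t\|(\hat W\hat E)_t - C_{t0}\|_\infty^2$ from above using the $k$-means near-optimality \eqref{11} against the true template together with the boundary term from Assumption~\ref{asp:curve}, and finish with a pigeonhole/union bound feeding into Lemma~\ref{lem:consistency}. The only cosmetic differences are that the paper uses the cruder inequality $(a+b+c)^2\le 3(a^2+b^2+c^2)$ in place of your two Minkowski steps (yielding the explicit constants $9$ and $72$ and the blanket assumption $\epsilon<1$ instead of your $(1+\sqrt{1+\epsilon})^2<6$), and that you are a bit more careful in taking $(WE)_t=C_{t0}$---rather than $C_t$---as the feasible competitor in \eqref{11}.
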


\begin{proof}
By \algref{alg1} in \secref{stationary_method}, we have
\begin{equation}
\sum_{t \in \cT}\|(\hat{W}\hat{E})_{t}-\hat{C}_{t}\|^2_{\infty} \leq (1+\epsilon)\cdot \min_{W \in \cW_{n,2}, E\in \mathbb{R}_{2\times (2m+1)^2}}\; \sum_{t \in \cT}\|(WE)_{t}-\hat{C}_{t}\|^2_{\infty},\label{47}
\end{equation}
where $\hat{W} \in \cW_{n,2}$, $\hat{E}\in \mathbb{R}_{2\times (2m+1)^2}$. Without loss of generality, set $\epsilon<1$, then
\begin{align}
\sum_{t \in \cT}\|(\hat{W}\hat{E})_{t}-\hat{C}_{t}\|^2_{\infty} &\leq (1+\epsilon) \sum_{t \in \cT}\|C_{t}-\hat{C}_{t}\|^2_{\infty}\\
& \leq 2 \sum_{t \in \cT}\|C_{t}-\hat{C}_{t}\|^2_{\infty}.\label{50}
\end{align}
On the one hand, by Triangle Inequality,
\begin{align}
\sum_{t \in \cT} \|(\hat{W}\hat{E})_{t}-C_{t0}\|^2_{\infty} &\leq
\sum_{t \in \cT} \;(\|(\hat{W}\hat{E})_t-\hat{C}_t\|_{\infty}+
 \|\hat{C}_t-C_t\|_{\infty}+\|C_t-C_{t0}\|_{\infty})^2\\
&\leq 3\sum_{t \in \cT} \|(\hat{W}\hat{E})_{t}-\hat{C}_t\|^2_{\infty}+
 3\sum_{t \in \cT}\|\hat{C}_t-C_t\|_{\infty}^2+3\sum_{t \in \cT}\|C_t-C_{t0}\|_{\infty}^2.
\end{align}
In addition, by \aspref{curve} and \eqref{50},
\begin{align}
\sum_{t \in \cT} \|(\hat{W}\hat{E})_{t}-C_{t0}\|^2_{\infty} &\leq 6\sum_{t \in \cT}\|C_t-\hat{C}_t\|_{\infty}^2+3\sum_{t \in \cT}\|\hat{C}_t-C_t\|_{\infty}^2+3\sum_{t \in \cH}\|C_t-C_{t0}\|_{\infty}^2\\
& = 9\sum_{t \in \cT} \|\hat{C}_t-C_{t}\|^2_{\infty}+3\sum_{t \in \cH}\|C_t-C_{t0}\|_{\infty}^2\\
& \leq 9\sum_{t \in \cT} \|\hat{C}_t-C_{t}\|^2_{\infty}+\frac{n(\log n)^\beta}{8}.
\end{align}
On the other hand,
\begin{align}
\sum_{t \in \cT} \|(\hat{W}\hat{E})_t-C_{t0}\|^2_{\infty}
\geq  \sum_{t \in \cA'} \frac{\Delta^2}{4}= \frac{\Delta^2(|\cA_0|+|\cA_1|)}{4} = \frac{|\cA'|\Delta^2}{4},
\end{align}
then we have 
\begin{align}
|\cA'|&\leq \frac{ 36\sum_{t \in \cT} \|\hat{C}_t-C_{t}\|^2_{\infty}+\frac{n(\log n)^\beta}{2}}{\Delta^2}.\label{57}
\end{align}
By \lemref{cluster}, 
\begin{equation}
\P\Big(|\cR|/n^2  >\frac{(\log n)^\beta}{\Delta^2 n}\Big) =
\P\Big(|\cR|>\frac{n(\log n)^\beta}{\Delta^2}\Big)\leq 
\P\Big(|\cA'|>\frac{n(\log n)^\beta}{\Delta^2}\Big),
\end{equation}
then by \lemref{consistency} and \eqref{57}, there exists a constant $J$,
\begin{align}
&\P\Big(|\cR|/n^2  >\frac{(\log n)^\beta}{\Delta^2 n}\Big) \\
&\leq \P\Big(\frac{36\sum_{t \in \cT} \|\hat{C}_t-C_{t}\|^2_{\infty}+\frac{n(\log n)^\beta}{2}}{\Delta^2}>\frac{n(\log n)^\beta}{\Delta^2}\Big)\\
&= \P\Big(\sum_{t \in \cT} \|\hat{C}_t-C_{t}\|^2_{\infty}>\frac{n(\log n)^\beta}{72}\Big)\\ 
&\leq \sum_{t \in \cT}\;\P\Big(\|\hat{C}_t-C_t\|^2_{\infty}>\frac{(\log n)^\beta}{72n}\Big)\\ 
&\leq 2(2\sqrt{n}+1)^2n^2 \exp(-J\cdot (\log n)^\beta).
\end{align}
So for $\forall \beta>1$, as $n \rightarrow \infty$,
\begin{equation}
\P\Big(|\cR|/n^2  >\frac{(\log n)^\beta}{\Delta^2 n}\Big)\leq 2(2\sqrt{n}+1)^2n^2 \exp(-J\cdot (\log n)^\beta) \rightarrow 0.
\end{equation}
Thus, for stationary Gaussian random field, we get the error bound of k-means clustering algorithm.
\end{proof}

\section{Non-stationary Textures}
\label{sec:non-stationary}
In this section we consider textures to be non-stationary. Here both $\cG_0$ and $\cG_1$ are non-stationary Gaussian Markov random fields with mean $0$. We add location information into consideration and cluster the $n^2$ pixels into two groups by single-linkage algorithm. The algorithm is established in \secref{nonstationary_method}. Then we show the consistency of a simple incarnation of the basic approach in \secref{nonstationary_theory}.

\subsection{Method}
\label{sec:nonstationary_method}
Pick up $\lfloor\frac{n}{2m+1}\rfloor\times \lfloor\frac{n}{2m+1}\rfloor$ pixels $\{X_u\}_{u \in \cU}$ with equal intervals from $\{X_t\}_{t\in \cT}$, where
\begin{equation}
\cU =\Big\{u= (u_1,u_2)\;\Big|\;u_1=(2m+1)\cdot t_1,\;\; u_2 =(2m+1)\cdot t_2, \;\;(t_1,t_2) = \Big\{1,2,\cdots, \Big\lfloor\frac{n}{2m+1}\Big\rfloor\Big\}^2\Big\}.
\end{equation}
Here $\cU$ is a subset of $\cT$. Similar to \eqref{patch} in \secref{stationary_method}, for $ \forall u \in \cU$, pick up patch $S_u$ as follows
\begin{equation}
S_u=
\begin{pmatrix}
   X_{u+(-m,-m)} & X_{u+(-m, -m+1)} & \cdots & X_{u+(-m,m)} \\
   X_{u+(-m+1,-m)} & X_{u+(-m+1,-m+1)} & \cdots & X_{u+(-m+1,m)} \\
   \vdots & \vdots & \cdots& \vdots \\
   X_{u+(m,-m)} & X_{u+(m,-m+1)} & \cdots& X_{u+(m,m)}
\end{pmatrix}.
\end{equation}\\
For $\forall u \neq v \in \cU$, it is obvious that $|u_1-v_1|\geq 2m+1$ or $|u_2-v_2|\geq 2m+1$. So there is no overlap between $S_u$ and $S_v$.

Next for $\forall u\in \cU$, add location information into its true feature $C_u$. Denote $C'_u$ as the new true feature of pixel $X_u$ as follows
\begin{equation}
C'_u = \Big(C_u, \frac{u_1}{n},\frac{u_2}{n}\Big),
\end{equation}
where $C'_u$ is a vector of length $(2m+1)^2+2$. Similarly, denote $\hat{C}'_u$ as the new observed feature of pixel $X_u$ as follows
\begin{equation}
\hat{C}'_u = \Big(\hat{C}_u, \frac{u_1}{n},\frac{u_2}{n}\Big),
\end{equation}
where $\hat{C}'_u$ also is a vector of length $(2m+1)^2+2$.

Apply single-linkage algorithm in following steps. Firstly among $\cU$, connect all pairs $(u,v)$ with $\|\hat{C}'_u-\hat{C}'_v\|_{\infty}<\frac{(\log n)^{\beta/2}}{\sqrt{n}}$, where $1<\beta<2$. Next for $\forall u \in \cU$, assign all the other pixels in $S_u$ into the same cluster as $X_u$. Then for any pixel $X_t$ which is still not clustered, find the pixel $X_u$ in $\{X_u\}_{u \in \cU}$ with the smallest distance to $X_t$, and assign pixel $X_t$ into the same cluster with $X_u$. Finally we obtain the clustering result.

As a summary, we provide the procedure of single-linkage algorithm in \algref{alg2}.
\begin{algorithm}[H]
  \caption{Texture Segmentation with Single-linkage Algorithm}
  \label{alg:alg2}
  \begin{algorithmic}[1]
    \REQUIRE
      Observations $\{X_t\}_{t\in \cT}$.
    \ENSURE
     Single-linkage clustering result.
    \STATE For $\forall u = (u_1,u_2) \in \cU$, pick up patch $S_u$.
    \STATE
    Calculate new observed features $\{\hat{C}'_u\}_{u\in \cU}$.
    \STATE
    Apply single-linkage algorithm based on $\{\hat{C}'_u\}_{u\in \cU}$.
    \RETURN Single-linkage clustering result.
  \end{algorithmic}
\end{algorithm}
Define the following set
\begin{equation}
\cV= \{u \in  \cU \;|\; S_u \cap  \cG_0 = \emptyset \;\text{or} \; S_u \cap  \cG_1 = \emptyset \}
\end{equation}
and
\begin{equation}
\cW = \{t \in \cT \;|\;X_t \;\text{is a pixel in patch}\; S_u \;\text{where}\; u\in \cV\}.
\end{equation}
In next section, we can show that all the pixels in $\cV$ can be clustered correctly with probability going to $1$. Thus, all pixels in $\cW$ can be clustered correctly with probability going to $1$.

\subsection{Theory}
\label{sec:nonstationary_theory}
Firstly we introduce two assumptions on the non-stationary level of the fields.
\begin{asp} 
\label{asp:D}
For any $X_t,X_s$ in the same sub-region,
\begin{equation}
\|C_t-C_s\|_{\infty}= O(\sqrt{\log n}\cdot {\sf D}(s,t)),
\end{equation}
where ${\sf D}(s,t)$ is the distance between two pixels $X_t$ and $X_s$
\begin{equation}
{\sf D}(s,t) = \sqrt{\Big(\frac{s_1-t_1}{n}\Big)^2+\Big(\frac{s_2-t_2}{n}\Big)^2}.
\end{equation}
\end{asp}

\begin{asp}
\label{asp:E}
 For any $X_t,X_s$ in different sub-regions, if ${\sf D}(s,t)< \frac{\log n}{\sqrt{n}}$, there exists a constant $K$ such that
\begin{equation}
\|C_t-C_s\|_{\infty}\geq \frac{K\cdot \log n}{\sqrt{n}}.
\end{equation}
\end{asp}
Next, we show that the single-linkage algorithm in above section is consistent.
\begin{theorem}
\label{thm:2}
Under \aspref{D} and \aspref{E}, by single-linkage clustering in \algref{alg2}, set threshold value $b=\frac{(\log n)^{\beta/2}}{\sqrt{n}}$, where $1<\beta <2$. Then as $n \rightarrow \infty$,
\begin{align} 
\P(\text{All pixels in} \;\cW\; \text{are clustered correctly}) 
&\geq 1-n^3\cdot  \exp(-J\cdot (\log n)^\beta)\;\;\rightarrow 1.
\end{align}
\end{theorem}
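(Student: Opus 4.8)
The plan is to run the classical consistency argument for single-linkage clustering, the only genuinely new ingredient being the concentration bound of \lemref{consistency} and a careful matching of the threshold $b=\frac{(\log n)^{\beta/2}}{\sqrt n}$ to the two scales at play. First I would fix the good event $\cE := \big\{\max_{u\in\cU}\|\hat{C}_u-C_u\|_\infty\le b/3\big\}$. Since $|\cU|=\lfloor n/(2m+1)\rfloor^2\le n^2$ and $2(2\sqrt n+1)^2\le 18n$, \lemref{consistency} together with a union bound over $\cU$ gives $\P(\cE^c)\le n^2\cdot 2(2\sqrt n+1)^2\exp\!\big(-J(b/3)^2 n\big)=18 n^3\exp\!\big(-\tfrac{J}{9}(\log n)^\beta\big)\le n^3\exp\!\big(-J'(\log n)^\beta\big)$ for $n$ large; this is precisely the probability loss in the statement, and it tends to $0$ because $\beta>1$ makes $(\log n)^\beta$ dominate $\log n$. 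It therefore suffices to show that on $\cE$, for $n$ large, every pixel of $\cW$ is clustered correctly.

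Next, working deterministically on $\cE$, I would establish two claims about the graph $G$ built in \algref{alg2} (vertex set $\cU$, an edge between $u,v$ iff $\|\hat{C}'_u-\hat{C}'_v\|_\infty<b$). \emph{(i) $G$ has no edge joining a point of $\cG_0$ to a point of $\cG_1$.} Take $u,v\in\cU$ in different sub-regions. If ${\sf D}(u,v)\ge\frac{\log n}{\sqrt n}$, the location block of $\hat{C}'_u-\hat{C}'_v$ (which carries no estimation error) already has $\ell_\infty$-norm $\ge\frac{\log n}{\sqrt2\,\sqrt n}>b$ for $n$ large, using $\beta<2$. If ${\sf D}(u,v)<\frac{\log n}{\sqrt n}$, then \aspref{E} gives $\|C_u-C_v\|_\infty\ge\frac{K\log n}{\sqrt n}$, so $\|\hat{C}'_u-\hat{C}'_v\|_\infty\ge\|\hat{C}_u-\hat{C}_v\|_\infty\ge\frac{K\log n}{\sqrt n}-\frac{2b}{3}\ge b$ for $n$ large, again because $\beta<2$ forces $(\log n)^{\beta/2}=o(\log n)$. \emph{(ii) Any two spatially adjacent grid points $u,v\in\cU$ (differing by $(2m+1,0)$ or $(0,2m+1)$) lying in the same sub-region are joined by an edge.} Here ${\sf D}(u,v)\le\frac{\sqrt2(2m+1)}{n}=O(n^{-1/2})$, so \aspref{D} gives $\|C_u-C_v\|_\infty=O(\sqrt{\log n}\,n^{-1/2})$, the location block has $\ell_\infty$-norm $\le\frac{2m+1}{n}=O(n^{-1/2})$, and $\|\hat{C}_u-C_u\|_\infty+\|\hat{C}_v-C_v\|_\infty\le\frac{2b}{3}$ on $\cE$; since $\beta>1$ makes $\sqrt{\log n}\,n^{-1/2}=o(b)$, these contributions sum to $\|\hat{C}'_u-\hat{C}'_v\|_\infty<b$ for $n$ large.

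Combining the two claims, every connected component of $G$ lies inside a single sub-region, and within each sub-region the grid points are linked in one component of $G$ by (ii) (a mild geometric fact, using that $\bar{\cG}$ is a regular curve and the spacing $2m+1=2\sqrt n+1$ is vanishing relative to $n$, so a grid path between in-region points can be taken through in-region points); hence $G$ has exactly two components, one per sub-region. As the patches $\{S_u\}_{u\in\cV}$ are pairwise disjoint and each is contained in a single sub-region, the labelling rule of \algref{alg2} assigns every pixel of $\cW$ the cluster of its own sub-region, so on $\cE$, for $n$ large, all pixels of $\cW$ are clustered correctly, giving the claimed bound. The crux — and the reason for imposing $1<\beta<2$ — is the simultaneous calibration in (i)–(ii): $b$ must exceed the within-region drift of the covariance feature ($\asymp\sqrt{\log n}/\sqrt n$ from \aspref{D}) plus the sampling error needed to make the union bound work, forcing $\beta>1$, yet $b$ must stay strictly below the between-region separation ($\asymp\log n/\sqrt n$ from \aspref{E}, or from the location coordinates when the points are far apart), forcing $\beta<2$. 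Everything else is routine: the concentration is already supplied by \lemref{consistency}, and the connectivity step is elementary geometry.
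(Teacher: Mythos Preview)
Your proposal is correct and follows essentially the same route as the paper: both arguments combine the concentration bound of \lemref{consistency} with \aspref{D} for within-region connectivity and \aspref{E} (plus the location coordinates for far pairs) for between-region separation, with the constraint $1<\beta<2$ calibrating the threshold exactly as you describe. The only organizational difference is that you condition on a single good event $\cE$ and argue deterministically, whereas the paper decomposes the failure probability into three union-bound terms (same-region neighbors too far, different-region pairs too close, and a separate treatment of centers $u\in\cU\setminus\cV$ via their vertical neighbors $u_+,u_-$); your uniform treatment of all cross-region pairs in claim~(i) subsumes the paper's third term and is slightly cleaner.
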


\begin{proof}
Set threshold value $b=\frac{(\log n)^{\beta/2}}{\sqrt{n}}$, where $1<\beta <2$. For $\forall u \in \cU$, denote $u_+$ as the pixel bordering and above $u$ in $\cU$, and denote $u_-$ as the pixel bordering and below $u$ in $\cU$, then
\begin{align}
&\P(\text{All pixels in} \;\cW\; \text{are clustered correctly}) \\
&\geq 1- \P\Big(\max_{u\in \cV}
\max_{\mbox{\tiny$\begin{array}{c}
v\in \cV\\ 
v, u\;\text{in the same sub-region} \\
S_u\; \text{borders on} \;S_v \end{array}$}} \|\hat{C}'_u-\hat{C}'_v\|_\infty >\frac{(\log n)^{\beta/2}}{\sqrt{n}}\Big)\\
&- \P\Big(\min_{u\in \cV}\min_{\mbox{\tiny$\begin{array}{c}
v\in \cV\\ 
v, u\;\text{in different sub-regions}\end{array}$}} \|\hat{C}'_u-\hat{C}'_v\|_\infty <\frac{(\log n)^{\beta/2}}{\sqrt{n}}\Big)\\
&- \P\Big(\min_{u \in \cU \backslash \cV} \|\hat{C}'_{u_+}-\hat{C}'_{u_-}\|_\infty <\frac{2(\log n)^{\beta/2}}{\sqrt{n}}\Big).
\end{align}
By Union bound,
\begin{align}
&\P(\text{All pixels in} \;\cW\; \text{are clustered correctly}) \\
&\geq 1-\sum_{u\in \cV}
\sum_{\mbox{\tiny$\begin{array}{c}
v\in \cV\\ 
v, u\;\text{in the same sub-region} \\
S_u\; \text{borders on} \;S_v \end{array}$}} \P\Big(\|\hat{C}'_u-\hat{C}'_{v}\|_\infty>\frac{(\log n)^{\beta/2}}{\sqrt{n}}\Big)\\
&-\sum_{u\in \cV}
\sum_{\mbox{\tiny$\begin{array}{c}
v\in \cV\\ 
v, u\;\text{in different sub-regions}\end{array}$}}
\P\Big(\|\hat{C}'_u-\hat{C}'_v\|_\infty <\frac{(\log n)^{\beta/2}}{\sqrt{n}}\Big)\\
&-\sum_{u \in \cU \backslash \cV}
\P\Big(\|\hat{C}'_{u_+}-\hat{C}'_{u_-}\|_\infty <\frac{2(\log n)^{\beta/2}}{\sqrt{n}}\Big).
\end{align}
We calculate above probability in three steps. Firstly, under \aspref{D}, for $\forall u \in \cU$,
\begin{equation}
\max_{\mbox{\tiny$\begin{array}{c}
v\in \cV\\ 
v, u\;\text{in the same sub-region}\\
S_u\; \text{borders on} \;S_v \end{array}$}} \|C'_u-C'_{v}\|_\infty = O\Big(\sqrt{\frac{\log n}{n}}\Big),\label{85}
\end{equation}
then by Triangle Inequality, 
\begin{align}
&\sum_{u\in \cV}\sum_{\mbox{\tiny$\begin{array}{c}
v\in \cV\\ 
v, u\;\text{in the same sub-region} \\
S_u\; \text{borders on} \;S_v \end{array}$}} \P\Big(\|\hat{C}'_u-\hat{C}'_{v}\|_\infty>\frac{(\log n)^{\beta/2}}{\sqrt{n}}\Big)\\
&\leq
\sum_{u\in \cV} \sum_{\mbox{\tiny$\begin{array}{c}
v\in \cV\\ 
v, u\;\text{in the same sub-region} \\
S_u\; \text{borders on} \;S_v \end{array}$}} \Big[ \P\Big(\|\hat{C}'_u-C'_u\|_\infty>\frac{(\log n)^{\beta/2}}{3\sqrt{n}}\Big)+\P\Big(\|\hat{C}'_{v}-C'_{v}\|_\infty>\frac{(\log n)^{\beta/2}}{3\sqrt{n}}\Big)\\
&+  \P\Big(\|C'_{u}-C'_{v}\|_\infty>\frac{(\log n)^{\beta/2}}{3\sqrt{n}}\Big)\Big]\\
&\leq
\sum_{u\in \cV} \sum_{\mbox{\tiny$\begin{array}{c}
v\in \cV\\ 
v, u\;\text{in the same sub-region} \\
S_u\; \text{borders on} \;S_v \end{array}$}} \Big[ \P\Big(\|\hat{C}'_u-C'_u\|_\infty>\frac{(\log n)^{\beta/2}}{3\sqrt{n}}\Big)+\P\Big(\|\hat{C}'_{v}-C'_{v}\|_\infty>\frac{(\log n)^{\beta/2}}{3\sqrt{n}}\Big)\Big].
\end{align}
By \lemref{consistency}, for $1<\beta <2$, there exists a constant $J$, as $n \rightarrow \infty$,

\begin{align}
&\sum_{u\in \cV}\sum_{\mbox{\tiny$\begin{array}{c}
v\in \cV\\ 
v, u\;\text{in the same sub-region} \\
S_u\; \text{borders on} \;S_v \end{array}$}} \P\Big(\|\hat{C}'_u-\hat{C}'_{v}\|_\infty>\frac{(\log n)^{\beta/2}}{\sqrt{n}}\Big)\\
&\leq |\cV| \cdot 16(2\sqrt{n}+1)^2 \exp(-J\cdot (\log n)^{\beta})\\
&\leq 16n^2 \exp(-J\cdot (\log n)^\beta) \;\; \rightarrow 0.
\end{align}\\
Secondly, for $\forall u,v \in \cU $ such that $X_u,X_v$ are in different sub-regions, if 
${\sf D}(u,v)\geq \frac{\log n}{\sqrt{n}},$
then
\begin{equation}\label{87}
\|C'_u-C'_v\|_\infty \geq \frac{\log n}{2\sqrt{n}}.
\end{equation}
If ${\sf D}(u,v)< \frac{\log n}{\sqrt{n}},$ under \aspref{E}, there exists a constant $K$, such that
\begin{equation}\label{88}
\|C'_u-C'_v\|_\infty \geq \frac{K \cdot\log n}{\sqrt{n}}.
\end{equation}
So there exists a constant $J$, as $n \rightarrow \infty$,
\begin{align}
&\sum_{u\in \cV}
\sum_{\mbox{\tiny$\begin{array}{c}
v\in \cV\\ 
v, u\;\text{in different sub-regions}\end{array}$}}
P\Big(\|\hat{C}'_u-\hat{C}'_v\|_\infty <\frac{(\log n)^{\beta/2}}{\sqrt{n}}\Big)\\
&\leq  \sum_{u\in \cV}
\sum_{\mbox{\tiny$\begin{array}{c}
v\in \cV\\ 
v, u\;\text{in different sub-regions}\end{array}$}}
\Big[P\Big(\|C'_u-C'_v\|_\infty <\frac{3(\log n)^{\beta/2}}{\sqrt{n}}\Big)+P\Big(\|\hat{C}'_u-C'_u\|_\infty >\frac{(\log n)^{\beta/2}}{\sqrt{n}}\large\Big)\\
&+P\Big(\|\hat{C}'_v-C'_v\|_\infty >\frac{(\log n)^{\beta/2}}{\sqrt{n}}\Big)\Big]\\
&\leq \frac{2n^4}{(2\sqrt{n}+1)^4}\cdot  P\Big(\|\hat{C}'_u-C'_u\|_\infty>\frac{(\log n)^{{\beta/2}}}{\sqrt{n}}\Big)\\
&\leq \frac{4n^4}{(2\sqrt{n}+1)^2}\cdot  \exp(-J\cdot (\log n)^\beta) \;\; \rightarrow 0.
\end{align}
Thirdly, for $\forall u \in \cU \backslash \cV$, $X_{u_+}$ and $X_{u_-}$ are in different sub-regions, so by \eqref{87} and \eqref{88},
\begin{align}
&\sum_{u \in \cU \backslash \cV}
\P\Big(\|\hat{C}'_{u_+}-\hat{C}'_{u_-}\|_\infty <\frac{2(\log n)^{\beta/2}}{\sqrt{n}}\Big)=0.
\end{align}
Thus, there exists a constant $J$, as $n \rightarrow \infty$,
\begin{align} 
&P(\text{All pixels in} \;\cW\; \text{are clustered correctly})\\ 
&\geq 1-16n^2 \exp(-J\cdot (\log n)^\beta)-\frac{4n^4}{(2\sqrt{n}+1)^2}\cdot  \exp(-J\cdot (\log n)^\beta)\\
&\geq 1-n^3\cdot  \exp(-J\cdot (\log n)^\beta)\;\;\rightarrow 1.
\end{align}
\end{proof}

\subsection{Example}
\subsubsection{Model}
Follow the ideas in \citep{higdon1999non}, we define non-stationary Gaussian process as follows. For any pixels $X_t$ and $X_s$, define non-stationary covariance between $X_t$ and $X_s$ to be

\begin{equation}
C(X_t,X_s) = \int_{\mathbb{R}^2}K_{t}(r)K_{s}(r)\d r,
\end{equation}
where $K_{t}(\cdot)$ and $K_{s}(\cdot)$ are Gaussian kernel functions
\begin{equation}
K_{t}(r) = \frac{1}{2\pi|\Sigma_t|^{\frac{1}{2}}}\exp{\left[-\frac{1}{2}(r-t)^T\Sigma_t^{-1}(r-t)\right]}
\end{equation}
and
\begin{equation}
K_{s}(r) = \frac{1}{2\pi|\Sigma_s|^{\frac{1}{2}}}\exp{\left[-\frac{1}{2}(r-s)^T\Sigma_s^{-1}(r-s)\right]}.
\end{equation}
It is easy to check the covariance is non-negative definite. Then we create the non-stationary process by convoluting the white noise process $\phi(\cdot)$ with kernel function $K_t(\cdot)$,
\begin{equation}
X_t = \int_{\mathbb{R}^2} K_{t}(r)\;\d \phi(r) \quad \text{for} \quad t \in \cT.
\end{equation}
Next, we simplify the covariance matrix. Suppose $B \sim N(0, \Sigma_t)$ and $D \sim N(s, \Sigma_s)$, where $B$ and $D$ are independent. Let $g_B(\cdot)$, $g_D(\cdot)$ and $g_{D-B}(\cdot)$ denote the density functions of $B, D$ and $D-B$. Similarly, $g_{B,D}(\cdot)$ and $g_{D-B,D}(\cdot)$ are join density functions. Then 
\begin{align} 
C(X_t,X_s) &= \int_{R^2}K_{t}(r)K_{s}(r)\d r
=\int_{R^2} g_B(r-t) \cdot g_D(r)\;\d r \\
&=\int_{R^2} g_{B,D}(r-t,r)\;\d r
= \int_{R^2} g_{D-B,D}(t,r) \;\d r\\
&= g_{D-B}(t) \cdot \int_R^2 g_{D}(r)\;\d r
= g_{D-B}(t).
\end{align}
Since $D-B \sim N(s, \Sigma_t+\Sigma_s)$, we have 
\begin{equation}
C(X_t, X_s) = g_{D-B}(t) = \frac{1}{2\pi|\Sigma_t+\Sigma_s|^{\frac{1}{2}}}\exp{\left[-\frac{1}{2}(t-s)^T(\Sigma_t+\Sigma_s)^{-1}(t-s)\right]}.
\end{equation}
\subsubsection{Assumptions and Theory on Example Model}

Each pixel $X_t$ has it own kernel function $K_{t}(\cdot)$, and the non-stationary process is controlled by kernel functions $\{K_{t}(\cdot)\}_{t\in \cT}$. For each pixel $X_t$, it has Gaussian kernel function 
\begin{equation}
K_{t}(r) = \frac{1}{2\pi|\Sigma_t|^{\frac{1}{2}}}\exp{\left[-\frac{1}{2}(r-t)^T\Sigma_t^{-1}(r-t)\right]}.
\end{equation}
The only parameters of the non-stationary process are the covariance matrices $\{\Sigma_t\}_{t \in \cT}$ of the kernel functions. Here we call $\{\Sigma_t\}_{t \in \cT}$ as the “size" of the kernel functions. If all pixels $X_t$ have the same "size" $\Sigma_t$, the field is stationary. For each pixel $X_t$, $t=(t_1,t_2)$ is a 2-dimension vector, so its "size" $\Sigma_t$ is a $2\times 2$ matrix. Denote it as
\begin{equation}
\Sigma_t=
 \begin{pmatrix}
   a_t & b_t \\
   c_t & d_t
  \end{pmatrix}.
\end{equation}
For any pixels $X_t$ and $X_s$, define
\begin{equation}
{\sf d}(s,t) = \max\{|a_t-a_s|,|b_t-b_s|,|c_t-c_s|,|d_t-d_s|\}.
\end{equation}
Next, similar to \aspref{D} and \aspref{E}, for the example kernel convolution model, we introduce assumptions directly on the "size" of kernel function.

\begin{asp} 
\label{asp:D'}
For $\forall X_t,X_s$ in the same sub-region, 
\begin{equation}
{\sf d}(s,t)= O(\sqrt{\log n}\cdot {\sf D}(s,t)).
\end{equation}
\end{asp}

\begin{asp}
\label{asp:E'}
For $\forall X_t,X_s$ in different sub-regions, if ${\sf D}(s,t)< \frac{\log n}{\sqrt{n}}$, there exists a constant $K$ such that
\begin{equation}
{\sf d}(s,t)\geq \frac{K\cdot \log n}{\sqrt{n}}.
\end{equation}
\end{asp}

From \lemref{3}, all conditions in \thmref{2} are satisfied, so \thmref{2} works here. Thus, under \aspref{D'} and \aspref{E'}, single-linkage algorithm is consistent under the example kernel model.

\section{Numerical Experiments}
\label{sec:numerics}

In our experiments, for the sake of stability, we use a form of size-constrained k-means \citep{wagstaff2001constrained, bradley2000constrained} , size-constrained single-linkage and size-constrained ward-linkage algorithms.

\subsection{Synthetic Stationary Textures}

In the section, we create several stationary random field models by moving average on Gaussian noise. Suppose white noise $Z_t \sim N(0,1)$. We generate four stationary random fields as follows
\begin{align}
&\text{Model 1: } \;\;X_t= \sum_{i=-m}^{m} Z_{t+(i,i)}. \\
& \text{Model 2: } \;\;X_t= \sum_{i=-m}^{m} Z_{t+(-i,i)}. \\
& \text{Model 3: } \;\; X_t= \sum_{i=-m}^{m} Z_{t+(0,i)}.\\
& \text{Model 4: }\;\; X_t= \sum_{i=-m}^{m} Z_{t+(i,0)}. 
\end{align}

Based on above four models, after standardization and combination, we obtain six $128\times 128$ mosaics, which are showed in Figure \ref{Fig.main.1}. Each mosaic contains two different textures, and it is divided by a straight line in the middle.

\begin{figure}[htbp]
\centering 
\subfigure[Model 1 vs Model 2]{
\label{Fig.sub.11}
\includegraphics[width=0.25\textwidth]{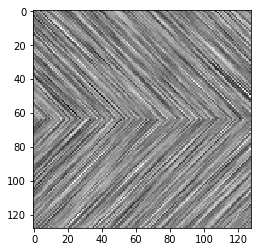}}
\subfigure[Model 1 vs Model 3]{
\label{Fig.sub.12}
\includegraphics[width=0.25\textwidth]{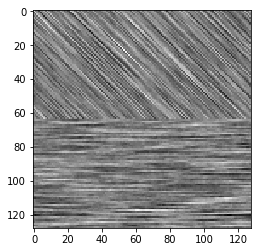}}
\subfigure[Model 1 vs Model 4]{
\label{Fig.sub.13}
\includegraphics[width=0.25\textwidth]{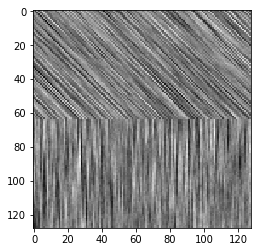}}
\subfigure[Model 2 vs Model 3]{
\label{Fig.sub.14}
\includegraphics[width=0.25\textwidth]{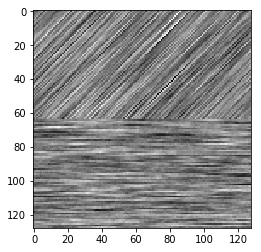}}
\subfigure[Model 2 vs Model 4]{
\label{Fig.sub.15}
\includegraphics[width=0.25\textwidth]{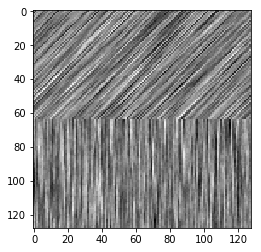}}
\subfigure[Model 3 vs Model 4]{
\label{Fig.sub.16}
\includegraphics[width=0.25\textwidth]{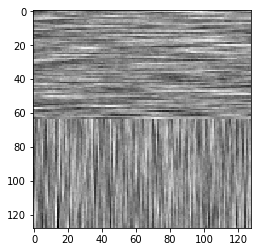}}
\caption{GMRF texture mosaics}
\label{Fig.main.1}
\end{figure}
We apply $\{\hat{C}'_t\}_{t\in \cT}$ as the observed features of pixels $\{X_t\}_{T \in \cT}$. Consider three clustering algorithms: size-constrained single-linkage, size-constrained ward-linkage and size-constrained k-means algorithms. We present segmentation accuracy in Table~\ref{table1}. Single-linkage works but does not perform well, and ward-linkage improves it. K-means algorithm works best and segmentation accuracy is almost 1. 

\begin{table}[htbp]
\centering
\begin{tabular}{c cc c} 
\toprule
Mosaic  & Single-linkage & Ward-linkage & K-means \\
\midrule
Model 1 vs Model 2 & 0.7362 & 0.9665 & 0.9868 \\ 
\midrule
Model 1 vs Model 3 & 0.9144 & 0.9632 & 0.9820\\
\midrule
Model 1 vs Model 4 & 0.8329 & 0.9677  & 0.9867\\
\midrule
Model 2 vs Model 3 & 0.9210 &0.9657 & 0.9822 \\
\midrule
Model 2 vs Model 4 & 0.8190 &0.9661 & 0.9868 \\ 
\midrule
Model 3 vs Model 4 & 0.7910 & 0.9626 & 0.9816\\ 
\midrule
Mean Value & 0.8358 & 0.9653 & 0.9844\\ 
\bottomrule
\end{tabular}
\caption{Segmentation accuracy}
\label{table1}
\end{table}

\subsection{Natural Textures}
In this section, We pick up textures from Brodatz  album \citep{brodatz1966textures}.

\subsubsection{Two Regions Divided by a Straight Line}
\label{sec:Straight}
We pick up three textures from Brodatz album \citep{brodatz1966textures}: $D21$, $D55$ and $D77$. After standardization and combination, we obtain three $160\times 160$ mosaics, which are showed in \figref{Fig.main.2}. Each mosaic contains two different Brodatz textures, and they are divided by a straight line in the middle.
\begin{figure}[htbp]
\centering 
\subfigure[D21 vs D55]{
\label{Fig.sub.21}
\includegraphics[width=0.25\textwidth]{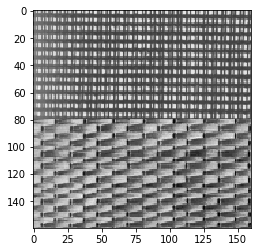}}
\subfigure[D21 vs D77]{
\label{Fig.sub.22}
\includegraphics[width=0.25\textwidth]{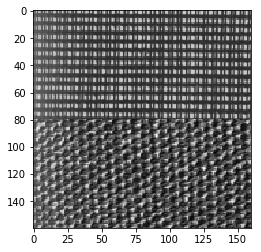}}
\subfigure[D55 vs D77]{
\label{Fig.sub.23}
\includegraphics[width=0.25\textwidth]{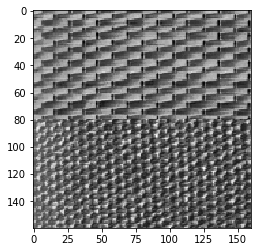}}
\caption{Texture mosaics}
\label{fig:Fig.main.2}
\end{figure}

We apply $\{\hat{C}'_t\}_{t\in \cT}$ as the observed features of pixels $\{X_t\}_{T \in \cT}$. Consider three clustering algorithms: size-constrained single-linkage, size-constrained ward-linkage and size-constrained k-means algorithms. Segmentation results are shown in \figref{3}. Also we present segmentation accuracy in Table~\ref{table2}. All three algorithms work perfectly and segmentation accuracy is almost 1. 

\begin{figure}[htbp]
\centering 
\subfigure[D21 vs D55]{
\label{fig:31}
\includegraphics[width=0.85\textwidth]{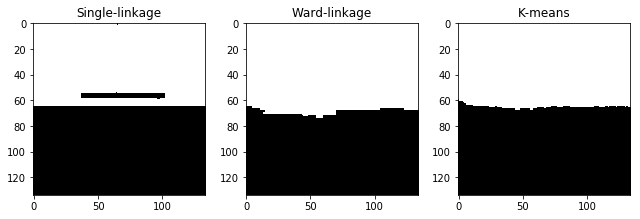}}
\subfigure[D21 vs D77]{
\label{fig:32}
\includegraphics[width=0.85\textwidth]{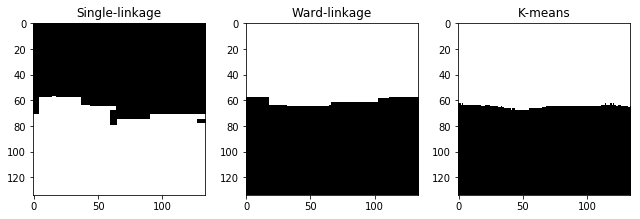}}
\subfigure[D55 vs D77]{
\label{fig:33}
\includegraphics[width=0.85\textwidth]{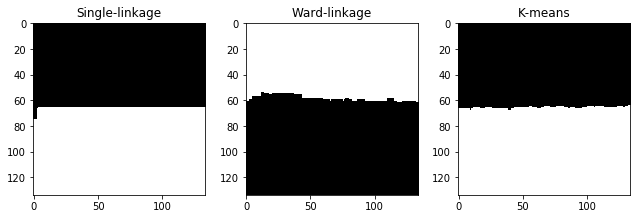}}
\caption{Segmentation results}
\label{fig:3}
\end{figure}

\begin{table}[htbp]
\centering
\begin{tabular}{c cc c} 
\toprule  
Mosaics & Single-linkage & Ward-linkage & K-means  \\
\midrule
 D21 vs D55  & 0.9703 & 0.9819 & 0.9891\\
\midrule
 D21 vs D77 & 0.9536 & 0.9592 & 0.9858\\
\midrule
D55 vs D77 & 0.9914 & 0.9396 & 0.9928\\
\midrule
 Mean Value & 0.9718 & 0.9602 & 0.9892\\
\bottomrule
\end{tabular}
\caption{Segmentation accuracy}
\label{table2}
\end{table}

\subsubsection{Two Regions Divided by a Curve}
Same to \secref{Straight}, we still run simulations on textures $D21$, $D55$ and $D77$. Here in each mosaic, the textures are divided by a circle in the middle, as shown in \figref{Fig.main.4}.
\begin{figure}[htbp]
\centering 
\subfigure[D21 vs D55]{
\label{Fig.sub.41}
\includegraphics[width=0.25\textwidth]{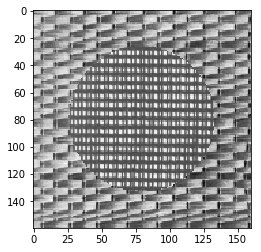}}
\subfigure[D21 vs D77]{
\label{Fig.sub.42}
\includegraphics[width=0.25\textwidth]{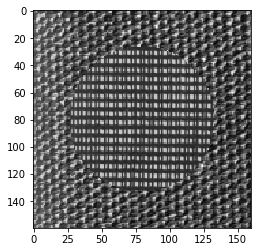}}
\subfigure[D55 vs D77]{
\label{Fig.sub.43}
\includegraphics[width=0.25\textwidth]{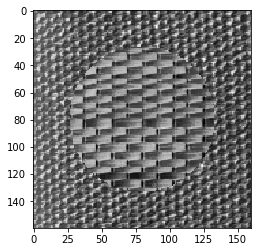}}
\caption{Texture mosaics}
\label{fig:Fig.main.4}
\end{figure}

We apply $\{\hat{C}'_t\}_{t\in \cT}$ as the observed features of pixels $\{X_t\}_{T \in \cT}$. Consider three clustering algorithms: size-constrained single-linkage, size-constrained ward-linkage and size-constrained k-means algorithms. Segmentation results are shown in \figref{5}. Also we present segmentation accuracy in Table \ref{table3}. Single-linkage works but does not perform well, and ward-linkage improves it. K-means algorithm works perfectly and segmentation accuracy is almost 1. 
\begin{figure}[htbp]
\centering 
\subfigure[D21 vs D55]{
\label{fig:51}
\includegraphics[width=0.85\textwidth]{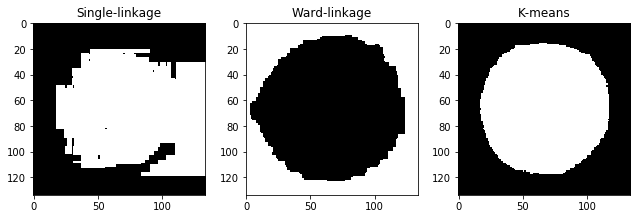}}
\subfigure[D21 vs D77]{
\label{fig:52}
\includegraphics[width=0.85\textwidth]{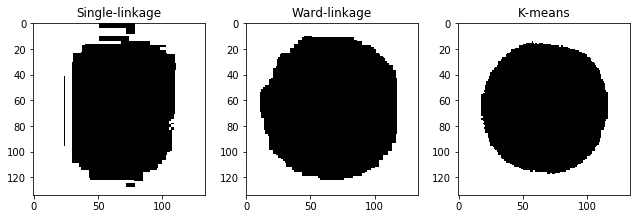}}
\subfigure[D55 vs D77]{
\label{fig:53}
\includegraphics[width=0.85\textwidth]{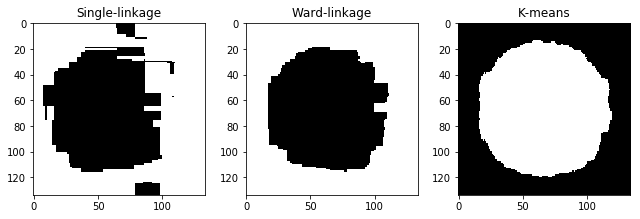}}
\caption{Segmentation results}
\label{fig:5}
\end{figure}

\begin{table}[htbp]
\centering
\begin{tabular}{c cc c} 
\toprule  
Mosaic & Single-linkage & Ward-linkage & K-means \\
\midrule
 D21 vs D55 & 0.8093 & 0.9332 & 0.9562\\
\midrule  
 D21 vs D77 & 0.8903 & 0.9563 & 0.9418\\
\midrule  
D55 vs D77 & 0.8295 & 0.8992 & 0.9739\\
\midrule  
 Mean Value & 0.8430 & 0.9296 & 0.9573\\
\bottomrule
\end{tabular}
\caption{Segmentation accuracy}
\label{table3}
\end{table}

\subsubsection{Multiple Regions}
\label{sec:multiple_region}
Here we construct three mosaics from eight textures in Brodatz album \citep{brodatz1966textures}: $D4$, $D6$, $D20$, $D21$, $D34$, $D52$, $D55$ and $D77$. After standardization and combination, we obtain three $160\times 160$ mosaics, which are showed in  \figref{Fig.main.6}. Each mosaic contains four different Brodatz textures, and they are divided by horizontal and vertical lines in the middle.
\begin{figure}[htbp]
\centering 
\subfigure[D4, D6, D20 \& D52]{
\label{Fig.sub.61}
\includegraphics[width=0.25\textwidth]{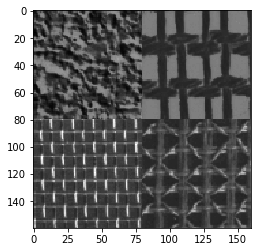}}
\subfigure[D21,D34, D55 \& D77]{
\label{Fig.sub.62}
\includegraphics[width=0.25\textwidth]{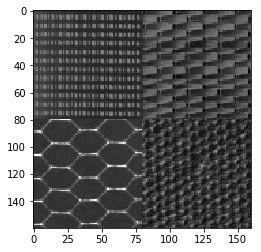}}
\subfigure[D6, D21, D34 \& D77]{
\label{Fig.sub.63}
\includegraphics[width=0.25\textwidth]{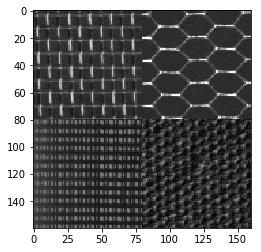}}
\caption{Texture mosaics}
\label{fig:Fig.main.6}
\end{figure}

We apply $\{\hat{C}'_t\}_{t\in \cT}$ as the observed features of pixels $\{X_t\}_{T \in \cT}$. Consider three clustering algorithms: size-constrained single-linkage, size-constrained ward-linkage and size-constrained k-means algorithms. Segmentation results are shown in \figref{7}. Also we present segmentation accuracy in Table~\ref{table4}. For multi-cluster mosaics, single-linkage works but does not perform well, and ward-linkage improves it. K-means algorithm works perfectly and segmentation accuracy is almost 1. 

\begin{figure}[htbp]
\centering 
\subfigure[D04, D06, D20 \& D52]{
\label{fig:71}
\includegraphics[width=0.85\textwidth]{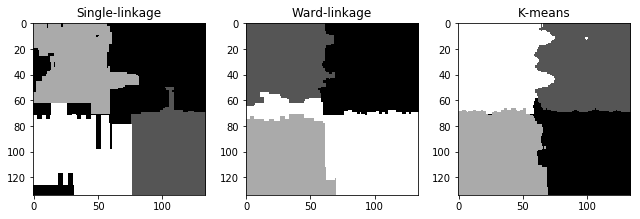}}
\subfigure[D21, D34, D55 \& D77]{
\label{fig:72}
\includegraphics[width=0.85\textwidth]{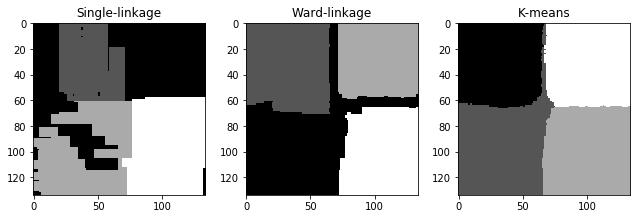}}
\subfigure[D06, D21, D34 \& D77]{
\label{fig:73}
\includegraphics[width=0.85\textwidth]{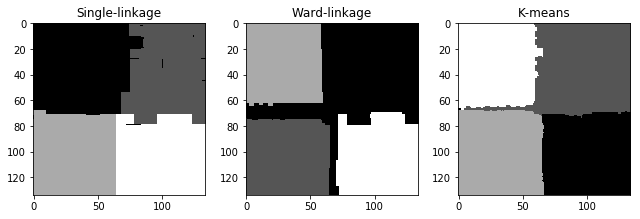}}
\caption{Segmentation results}
\label{fig:7}
\end{figure}

\begin{table}[htbp]
\centering
\begin{tabular}{c c c c} 
\toprule  
  Mosaic & Single-linkage & Ward-linkage & K-means \\
\midrule
 D04, D06, D20 \& D52 & 0.8478 & 0.8985 & 0.9507\\
\midrule
 D21, D34, D55 \& D77 & 0.7530 & 0.8969 & 0.9663\\
\midrule
D06, D21, D34 \& D77 & 0.9158 & 0.8767 & 0.9465\\
\midrule
 Mean Value & 0.8389 & 0.8907 & 0.9545\\
\bottomrule
\end{tabular}
\caption{Segmentation accuracy}
\label{table4}
\end{table}

\section{Discussion}
For non-stationary textures, instead of single linkage clustering, we could also use DBSCAN algorithm. When MinPts $= 2$, DBSCAN is very similar to single-linkage algorithm, but DBSCAN includes a step for removing noisy observations. In this paper, these noisy observations could be the patches that overlap with the edge between the two sub-regions. In practice, in different settings and backgrounds, we can apply different MinPts values in DBSCAN algorithm.

Also in \secref{stationary} and \secref{non-stationary}, we only theoretically show the cases where there are only two sub-regions. Actually when there are more than two regions, the introduced algorithms also work. For example, we indicate the scene with four sub-regions in \secref{multiple_region}.

In this paper, we do clustering based on local second moment information on patches. Actually beyond sample autocovariance, we could also use higher-order statistics or other features such as SIFT \citep{lowe1999object} for more complex textures that are not necessarily Gaussian, and speculate that similar results could also be obtained in these situations.
 
\subsection*{Acknowledgments}
I am grateful to my PhD advisor, Ery Arias-Castro, for suggesting this topic and for his assistance and support throughout this project. I also want to thank Danna Zhang for helpful discussions regarding Gaussian MRFs.

\bibliographystyle{chicago}
\bibliography{ref}

\begin{thebibliography}{}

\bibitem[\protect\citeauthoryear{Arias-Castro}{Arias-Castro}{2011}]{arias2011clustering}
Arias-Castro, E. (2011).
\newblock Clustering based on pairwise distances when the data is of mixed
  dimensions.
\newblock {\em IEEE Transactions on Information Theory\/}~{\em 57\/}(3),
  1692--1706.

\bibitem[\protect\citeauthoryear{Arias-Castro, Bubeck, Lugosi, and
  Verzelen}{Arias-Castro et~al.}{2018}]{arias2018detecting}
Arias-Castro, E., S.~Bubeck, G.~Lugosi, and N.~Verzelen (2018).
\newblock Detecting {M}arkov random fields hidden in white noise.
\newblock {\em Bernoulli\/}~{\em 24\/}(4B), 3628--3656.

\bibitem[\protect\citeauthoryear{Arthur and Vassilvitskii}{Arthur and
  Vassilvitskii}{2007}]{arthur2006k}
Arthur, D. and S.~Vassilvitskii (2007).
\newblock k-means++: The advantages of careful seeding.
\newblock In {\em Proceedings of the Eighteenth Annual ACM-SIAM Symposium on
  Discrete Algorithms. Society for Industrial and Applied Mathematics}, pp.\
  1027--1035.

\bibitem[\protect\citeauthoryear{Badrinarayanan, Kendall, and
  Cipolla}{Badrinarayanan et~al.}{2017}]{badrinarayanan2017segnet}
Badrinarayanan, V., A.~Kendall, and R.~Cipolla (2017).
\newblock Segnet: A deep convolutional encoder-decoder architecture for image
  segmentation.
\newblock {\em IEEE Transactions on Pattern Analysis and Machine
  Intelligence\/}~{\em 39\/}(12), 2481--2495.

\bibitem[\protect\citeauthoryear{Bradley, Bennett, and Demiriz}{Bradley
  et~al.}{2000}]{bradley2000constrained}
Bradley, P.~S., K.~P. Bennett, and A.~Demiriz (2000).
\newblock Constrained k-means clustering.
\newblock {\em Technical Report\/}~{\em Microsoft Research, Redmond, WA}.

\bibitem[\protect\citeauthoryear{Brodatz}{Brodatz}{1966}]{brodatz1966textures}
Brodatz, P. (1966).
\newblock {\em Textures: A photographic album for artists and designers}.
\newblock Dover Publications.

\bibitem[\protect\citeauthoryear{Chellappa and Chatterjee}{Chellappa and
  Chatterjee}{1985}]{chellappa1985classification}
Chellappa, R. and S.~Chatterjee (1985).
\newblock Classification of textures using {G}aussian {M}arkov random fields.
\newblock {\em IEEE Transactions on Acoustics, Speech, and Signal
  Processing\/}~{\em 33\/}(4), 959--963.

\bibitem[\protect\citeauthoryear{Cross and Jain}{Cross and
  Jain}{1983}]{cross1983markov}
Cross, G.~R. and A.~K. Jain (1983).
\newblock Markov random field texture models.
\newblock {\em IEEE Transactions on Pattern Analysis and Machine
  Intelligence\/}~{\em 1}, 25--39.

\bibitem[\protect\citeauthoryear{Dasgupta}{Dasgupta}{1999}]{dasgupta1999learning}
Dasgupta, S. (1999).
\newblock Learning mixtures of {G}aussians.
\newblock In {\em 40th Annual Symposium on Foundations of Computer Science},
  pp.\  634--644. IEEE.

\bibitem[\protect\citeauthoryear{Dasgupta}{Dasgupta}{2010}]{dasgupta2010hierarchical}
Dasgupta, S. (2010).
\newblock Hierarchical clustering with performance guarantees.
\newblock In {\em Classification as a Tool for Research}, pp.\  3--14.
  Springer.

\bibitem[\protect\citeauthoryear{Dasgupta and Long}{Dasgupta and
  Long}{2005}]{dasgupta2005performance}
Dasgupta, S. and P.~M. Long (2005).
\newblock Performance guarantees for hierarchical clustering.
\newblock {\em Journal of Computer and System Sciences\/}~{\em 70\/}(4),
  555--569.

\bibitem[\protect\citeauthoryear{Dunn and Higgins}{Dunn and
  Higgins}{1995}]{dunn1995optimal}
Dunn, D. and W.~E. Higgins (1995).
\newblock Optimal {G}abor filters for texture segmentation.
\newblock {\em IEEE Transactions on Image Processing\/}~{\em 4\/}(7), 947--964.

\bibitem[\protect\citeauthoryear{Galun, Sharon, Basri, and Brandt}{Galun
  et~al.}{2003}]{galun2003texture}
Galun, M., E.~Sharon, R.~Basri, and A.~Brandt (2003).
\newblock Texture segmentation by multiscale aggregation of filter responses
  and shape elements.
\newblock In {\em Proceedings Ninth IEEE International Conference on Computer
  Vision}, pp.\  716--725. IEEE.

\bibitem[\protect\citeauthoryear{Geman and Graffigne}{Geman and
  Graffigne}{1986}]{geman1986markov}
Geman, S. and C.~Graffigne (1986).
\newblock Markov random field image models and their applications to computer
  vision.
\newblock In {\em Proceedings of the International Congress of Mathematicians},
  Volume~1, pp.\  1496--1517.

\bibitem[\protect\citeauthoryear{Grigorescu, Petkov, and Kruizinga}{Grigorescu
  et~al.}{2002}]{grigorescu2002comparison}
Grigorescu, S.~E., N.~Petkov, and P.~Kruizinga (2002).
\newblock Comparison of texture features based on {G}abor filters.
\newblock {\em IEEE Transactions on Image Processing\/}~{\em 11\/}(10),
  1160--1167.

\bibitem[\protect\citeauthoryear{Higdon, Swall, and Kern}{Higdon
  et~al.}{1999}]{higdon1999non}
Higdon, D., J.~Swall, and J.~Kern (1999).
\newblock Non-stationary spatial modeling.
\newblock {\em Bayesian Statistics\/}~{\em 6\/}(1), 761--768.

\bibitem[\protect\citeauthoryear{Hofmann, Puzicha, and Buhmann}{Hofmann
  et~al.}{1998}]{hofmann1998unsupervised}
Hofmann, T., J.~Puzicha, and J.~M. Buhmann (1998).
\newblock Unsupervised texture segmentation in a deterministic annealing
  framework.
\newblock {\em IEEE Transactions on Pattern Analysis and Machine
  Intelligence\/}~{\em 20\/}(8), 803--818.

\bibitem[\protect\citeauthoryear{Hsu and Kakade}{Hsu and
  Kakade}{2013}]{hsu2013learning}
Hsu, D. and S.~M. Kakade (2013).
\newblock Learning mixtures of spherical {G}aussians: moment methods and
  spectral decompositions.
\newblock In {\em Proceedings of the 4th Conference on Innovations in
  Theoretical Computer Science}, pp.\  11--20.

\bibitem[\protect\citeauthoryear{Humeau-Heurtier}{Humeau-Heurtier}{2019}]{humeau2019texture}
Humeau-Heurtier, A. (2019).
\newblock Texture feature extraction methods: A survey.
\newblock {\em IEEE Access\/}~{\em 7}, 8975--9000.

\bibitem[\protect\citeauthoryear{Jain and Farrokhnia}{Jain and
  Farrokhnia}{1991}]{jain1990unsupervised}
Jain, A.~K. and F.~Farrokhnia (1991).
\newblock Unsupervised texture segmentation using {G}abor filters.
\newblock {\em Pattern Recognition\/}~{\em 24\/}(12), 1167--1186.

\bibitem[\protect\citeauthoryear{Kumar, Sabharwal, and Sen}{Kumar
  et~al.}{2004}]{kumar2004simple}
Kumar, A., Y.~Sabharwal, and S.~Sen (2004).
\newblock A simple linear time $(1+\epsilon)$-approximation algorithm for
  k-means clustering in any dimensions.
\newblock In {\em 45th Annual IEEE Symposium on Foundations of Computer
  Science}, pp.\  454--462. IEEE.

\bibitem[\protect\citeauthoryear{Liu, Chen, Fieguth, Zhao, Chellappa, and
  Pietik{\"a}inen}{Liu et~al.}{2019}]{liu2019bow}
Liu, L., J.~Chen, P.~Fieguth, G.~Zhao, R.~Chellappa, and M.~Pietik{\"a}inen
  (2019).
\newblock From {B}o{W} to {CNN}: Two decades of texture representation for
  texture classification.
\newblock {\em International Journal of Computer Vision\/}~{\em 127\/}(1),
  74--109.

\bibitem[\protect\citeauthoryear{Long, Shelhamer, and Darrell}{Long
  et~al.}{2015}]{long2015fully}
Long, J., E.~Shelhamer, and T.~Darrell (2015).
\newblock Fully convolutional networks for semantic segmentation.
\newblock In {\em Proceedings of the IEEE Conference on Computer Vision and
  Pattern Recognition}, pp.\  3431--3440.

\bibitem[\protect\citeauthoryear{Lowe}{Lowe}{1999}]{lowe1999object}
Lowe, D.~G. (1999).
\newblock Object recognition from local scale-invariant features.
\newblock In {\em Proceedings of the Seventh IEEE International Conference on
  Computer Vision}, Volume~2, pp.\  1150--1157. IEEE.

\bibitem[\protect\citeauthoryear{Malik, Belongie, Leung, and Shi}{Malik
  et~al.}{2001}]{malik2001contour}
Malik, J., S.~Belongie, T.~Leung, and J.~Shi (2001).
\newblock Contour and texture analysis for image segmentation.
\newblock {\em International Journal of Computer Vision\/}~{\em 43\/}(1),
  7--27.

\bibitem[\protect\citeauthoryear{Manjunath and Chellappa}{Manjunath and
  Chellappa}{1991}]{manjunath1991unsupervised}
Manjunath, B.~S. and R.~Chellappa (1991).
\newblock Unsupervised texture segmentation using {M}arkov random field models.
\newblock {\em IEEE Transactions on Pattern Analysis and Machine
  Intelligence\/}~{\em 13\/}(5), 478--482.

\bibitem[\protect\citeauthoryear{Mao and Jain}{Mao and
  Jain}{1992}]{mao1992texture}
Mao, J. and A.~K. Jain (1992).
\newblock Texture classification and segmentation using multiresolution
  simultaneous autoregressive models.
\newblock {\em Pattern Recognition\/}~{\em 25\/}(2), 173--188.

\bibitem[\protect\citeauthoryear{Milletari, Navab, and Ahmadi}{Milletari
  et~al.}{2016}]{milletari2016v}
Milletari, F., N.~Navab, and S.~A. Ahmadi (2016).
\newblock V-net: Fully convolutional neural networks for volumetric medical
  image segmentation.
\newblock In {\em 2016 Fourth International Conference on 3D Vision (3DV)},
  pp.\  565--571. IEEE.

\bibitem[\protect\citeauthoryear{Ng, Jordan, and Weiss}{Ng
  et~al.}{2002}]{ng2002spectral}
Ng, A.~Y., M.~I. Jordan, and Y.~Weiss (2002).
\newblock On spectral clustering: Analysis and an algorithm.
\newblock In {\em Advances in Neural Information Processing Systems}, pp.\
  849--856.

\bibitem[\protect\citeauthoryear{Paciorek and Schervish}{Paciorek and
  Schervish}{2006}]{paciorek2006spatial}
Paciorek, C.~J. and M.~J. Schervish (2006).
\newblock Spatial modelling using a new class of nonstationary covariance
  functions.
\newblock {\em Environmetrics: The Official Journal of the International
  Environmetrics Society\/}~{\em 17\/}(5), 483--506.

\bibitem[\protect\citeauthoryear{Pal and Pal}{Pal and
  Pal}{1993}]{pal1993review}
Pal, N.~R. and S.~K. Pal (1993).
\newblock A review on image segmentation techniques.
\newblock {\em Pattern Recognition\/}~{\em 26\/}(9), 1277--1294.

\bibitem[\protect\citeauthoryear{Randen and Husoy}{Randen and
  Husoy}{1999a}]{randen1999filtering}
Randen, T. and J.~H. Husoy (1999a).
\newblock Filtering for texture classification: A comparative study.
\newblock {\em IEEE Transactions on Pattern Analysis and Machine
  Intelligence\/}~{\em 21\/}(4), 291--310.

\bibitem[\protect\citeauthoryear{Randen and Husoy}{Randen and
  Husoy}{1999b}]{randen1999texture}
Randen, T. and J.~H. Husoy (1999b).
\newblock Texture segmentation using filters with optimized energy separation.
\newblock {\em IEEE Transactions on Image Processing\/}~{\em 8\/}(4), 571--582.

\bibitem[\protect\citeauthoryear{Reed and Dubuf}{Reed and
  Dubuf}{1993}]{reed1993review}
Reed, T.~R. and J.~H. Dubuf (1993).
\newblock A review of recent texture segmentation and feature extraction
  techniques.
\newblock {\em CVGIP: Image Understanding\/}~{\em 57\/}(3), 359--372.

\bibitem[\protect\citeauthoryear{Ronneberger, Fischer, and Brox}{Ronneberger
  et~al.}{2015}]{ronneberger2015u}
Ronneberger, O., P.~Fischer, and T.~Brox (2015).
\newblock U-net: Convolutional networks for biomedical image segmentation.
\newblock In {\em International Conference on Medical Image Computing and
  Computer-Assisted Intervention}, pp.\  234--241. Springer.

\bibitem[\protect\citeauthoryear{Rudelson and Vershynin}{Rudelson and
  Vershynin}{2013}]{rudelson2013hanson}
Rudelson, M. and R.~Vershynin (2013).
\newblock Hanson-{W}right inequality and sub-{G}aussian concentration.
\newblock {\em Electronic Communications in Probability\/}~{\em 18\/}(82),
  1--9.

\bibitem[\protect\citeauthoryear{Rue and Held}{Rue and
  Held}{2005}]{rue2005gaussian}
Rue, H. and L.~Held (2005).
\newblock {\em Gaussian Markov random fields: theory and applications}.
\newblock London: Chapman and Hall–CRC Press.

\bibitem[\protect\citeauthoryear{Shi and Malik}{Shi and
  Malik}{2000}]{shi2000normalized}
Shi, J. and J.~Malik (2000).
\newblock Normalized cuts and image segmentation.
\newblock {\em IEEE Transactions on Pattern Analysis and Machine
  Intelligence\/}~{\em 22\/}(8), 888--905.

\bibitem[\protect\citeauthoryear{Unser}{Unser}{1995}]{unser1995texture}
Unser, M. (1995).
\newblock Texture classification and segmentation using wavelet frames.
\newblock {\em IEEE Transactions on Image Processing\/}~{\em 4\/}(11),
  1549--1560.

\bibitem[\protect\citeauthoryear{Varma}{Varma}{2004}]{varma2004statistical}
Varma, M. (2004).
\newblock {\em Statistical approaches to texture classification}.
\newblock Ph.\ D. thesis, University of Oxford.

\bibitem[\protect\citeauthoryear{Vempala and Wang}{Vempala and
  Wang}{2004}]{vempala2004spectral}
Vempala, S. and G.~Wang (2004).
\newblock A spectral algorithm for learning mixture models.
\newblock {\em Journal of Computer and System Sciences\/}~{\em 68\/}(4),
  841--860.

\bibitem[\protect\citeauthoryear{Verzelen}{Verzelen}{2010a}]{verzelen2010adaptive}
Verzelen, N. (2010a).
\newblock Adaptive estimation of stationary {G}aussian fields.
\newblock {\em The Annals of Statistics\/}~{\em 38\/}(3), 1363--1402.

\bibitem[\protect\citeauthoryear{Verzelen}{Verzelen}{2010b}]{verzelen2010high}
Verzelen, N. (2010b).
\newblock High-dimensional {G}aussian model selection on a {G}aussian design.
\newblock In {\em Annales de I'IHP Probabilit{\'e}s et Statistiques},
  Volume~46, pp.\  480--524.

\bibitem[\protect\citeauthoryear{Verzelen and Villers}{Verzelen and
  Villers}{2009}]{verzelen2009tests}
Verzelen, N. and F.~Villers (2009).
\newblock Tests for {G}aussian graphical models.
\newblock {\em Computational Statistics \& Data Analysis\/}~{\em 53\/}(5),
  1894--1905.

\bibitem[\protect\citeauthoryear{Wagstaff, Cardie, Rogers, and
  Schr{\"o}dl}{Wagstaff et~al.}{2001}]{wagstaff2001constrained}
Wagstaff, K., C.~Cardie, S.~Rogers, and S.~Schr{\"o}dl (2001).
\newblock Constrained k-means clustering with background knowledge.
\newblock In {\em Proceedings of the Eighteenth International Conference on
  Machine Learning}, Volume~1, pp.\  577--584.

\bibitem[\protect\citeauthoryear{Weldon and Higgins}{Weldon and
  Higgins}{1996}]{weldon1996design}
Weldon, T.~P. and W.~E. Higgins (1996).
\newblock Design of multiple {G}abor filters for texture segmentation.
\newblock In {\em 1996 IEEE International Conference on Acoustics, Speech, and
  Signal Processing Conference Proceedings}, Volume~4, pp.\  2243--2246. IEEE.

\bibitem[\protect\citeauthoryear{Zhang}{Zhang}{2006}]{zhang2006advances}
Zhang, Y.~J. (2006).
\newblock {\em Advances in image and video segmentation}.
\newblock IRM Press.

\bibitem[\protect\citeauthoryear{Zhu, Wu, and Mumford}{Zhu
  et~al.}{1998}]{zhu1998filters}
Zhu, S.~C., Y.~Wu, and D.~Mumford (1998).
\newblock Filters, random fields and maximum entropy ({FRAME}): Towards a
  unified theory for texture modeling.
\newblock {\em International Journal of Computer Vision\/}~{\em 27\/}(2),
  107--126.

\end{thebibliography}

\appendix
\section{Miscellanea}

\subsection{Auxiliary results}

\begin{lem}
\label{lem:3}
For any pixels $X_t, X_s$ in the image, 
\begin{equation}
\|C_t-C_s\|_\infty = O({\sf d}(s,t)).
\end{equation}
\end{lem}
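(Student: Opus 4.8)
The plan is to bound the sup-norm difference $\|C_t - C_s\|_\infty$ by controlling, entry by entry, how much the true autocovariance coordinates of a patch can change when we move from pixel $X_t$ to pixel $X_s$. Recall that $C_t$ is the vectorization of $\{C_t(i)\}_{i\in\cM}$, where $C_t(i)$ is an average of terms $\E(X_\tau X_{\tau+i})$ over $\tau$ in the patch $S_t$. Under the example kernel-convolution model, each such term is $C(X_\tau, X_{\tau+i}) = g_{D-B}(i)$ with $D-B \sim N(i, \Sigma_\tau + \Sigma_{\tau+i})$, i.e.\ an explicit smooth function of the kernel "sizes" $\Sigma_\tau, \Sigma_{\tau+i}$ through the $2\times 2$ matrix $\Sigma_\tau + \Sigma_{\tau+i}$. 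So the natural first step is to write $C_t(i)$ as a function $F(i; \{\Sigma_\tau\}_{\tau \in S_t})$ and show that $F$ is Lipschitz in the $\Sigma_\tau$'s with respect to the metric ${\sf d}$: a change of at most ${\sf d}(s,t)$ in each entry of each relevant $\Sigma_\tau$ produces a change of $O({\sf d}(s,t))$ in $C_t(i) - C_s(i)$.

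Concretely, I would proceed as follows. First, fix an offset $i \in \cM$ and a corresponding pair of summands in $C_t(i)$ and $C_s(i)$; since $S_t$ and $S_s$ have the same shape, the sums have the same number of terms and can be matched up term by term. Second, for a matched pair, bound $|g_{D-B}^{(t)}(i) - g_{D-B}^{(s)}(i)|$ where the two Gaussian densities differ only through their covariance matrices $M_t = \Sigma_{\tau} + \Sigma_{\tau+i}$ and $M_s = \Sigma_{\tau'} + \Sigma_{\tau'+i}$ (with $\tau'$ the shifted copy of $\tau$); by definition of ${\sf d}$, each entry of $M_t - M_s$ is at most $2\,{\sf d}(s,t)$. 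Third, invoke smoothness of the map $M \mapsto (2\pi|M|^{1/2})^{-1}\exp(-\tfrac12 i^T M^{-1} i)$ on a neighborhood of positive-definite matrices (bounded determinant from below, entries bounded above) to get $|g^{(t)} - g^{(s)}| = O({\sf d}(s,t))$ uniformly over $i \in \cM$. Fourth, average over the (finitely many, and $n$-independently many for fixed $m$) terms and take the max over $i \in \cM$ to conclude $\|C_t - C_s\|_\infty = O({\sf d}(s,t))$.

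The main obstacle is making the Lipschitz bound on $M \mapsto (2\pi|M|^{1/2})^{-1}\exp(-\tfrac12 i^T M^{-1} i)$ genuinely uniform: one needs the perturbed matrices $M_s$ to stay inside a fixed compact set of positive-definite matrices with determinant bounded away from $0$, so that the derivative of the density with respect to the matrix entries (which involves $M^{-1}$ and $|M|^{-1/2}$) is bounded by an absolute constant. This requires an implicit standing assumption — natural in this model — that the kernel sizes $\{\Sigma_\tau\}$ are uniformly bounded above and uniformly elliptic (eigenvalues bounded away from $0$ and $\infty$); under that assumption the bound is routine. A secondary, more bookkeeping-level point is the term-matching between the patches $S_t$ and $S_s$: one must be careful that "both $X_\tau$ and $X_{\tau+i}$ are in $S_t$" translates correctly under the shift $\tau \mapsto \tau + (s-t)$, which it does because $S_s$ is simply the translate of $S_t$. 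With these in hand, the estimate follows, and plugging it into \aspref{D'} and \aspref{E'} recovers exactly \aspref{D} and \aspref{E}, so \thmref{2} applies to the example model.
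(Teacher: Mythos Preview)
Your proposal is correct and follows essentially the same route as the paper. The paper also reduces to bounding $|C(X_t,X_{t+i})-C(X_s,X_{s+i})|$ entry by entry, but instead of invoking smoothness of $M\mapsto (2\pi|M|^{1/2})^{-1}\exp\bigl(-\tfrac12\, i^{\top}M^{-1}i\bigr)$ abstractly on a compact set of positive-definite matrices, it carries this out explicitly: it factors the density as $R_{t,i}\cdot Q_{t,i}$ (determinant prefactor times exponential), shows $|R_{t,i}-R_{s,i}|=O({\sf d}(s,t))$ and $|Q_{t,i}-Q_{s,i}|=O({\sf d}(s,t)/n)$ (the latter because the argument $i/n$ is small), and combines via $|RQ-R'Q'|\le R|Q-Q'|+Q'|R-R'|$. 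This is just a hands-on version of your Lipschitz step, and it shows in passing that the dominant contribution comes from the determinant prefactor. Your explicit remark that uniform ellipticity of the kernel sizes is needed is something the paper uses silently.

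One small caution on your bookkeeping: the claim ``by definition of ${\sf d}$, each entry of $M_t-M_s$ is at most $2\,{\sf d}(s,t)$'' is only literally correct for the central pair $(\tau,\tau')=(t,s)$. For a general matched pair $\tau,\tau'$ in the two patches, and for the shifted pair $(\tau+i,\tau'+i)$, the entrywise difference is controlled by ${\sf d}(\tau,\tau')$ and ${\sf d}(\tau+i,\tau'+i)$, which are not a priori equal to ${\sf d}(s,t)$. The paper's proof has the same slippage (it tacitly needs ${\sf d}(t+i,s+i)=O({\sf d}(s,t))$ when bounding $|I_{t,i}-I_{s,i}|$ but never says so). In the intended application this is harmless, since the lemma is only used together with \aspref{D'}, under which ${\sf d}$ is controlled by the translation-invariant distance ${\sf D}$, so all these quantities are of the same order.
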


\begin{proof}
For any pixel $X_t$ and $\forall i=(i_1,i_2) \in \cM$,
\begin{align}
C_t(i) &= \text{Mean value of }\{ C(X_t, X_{t+i})\;|\;\; \text{both} \;\;X_{t}\; \;\text{and} \;\;X_{t+i}\;\; \text{are in} \;S_t \;\}.
\end{align} 
Also we have
\begin{align} 
& C(X_{t}, X_{t+i})\\ 
&= \frac{1}{(2\pi)^2|\Sigma_t+\Sigma_{t+i}|^{\frac{1}{2}}}\cdot\exp{\left(-\frac{1}{2}(\frac{i}{n})(\Sigma_t+\Sigma_{t+i})^{-1}(\frac{i}{n})^T\right)}\\
&=
\frac{1}{(2\pi)^2\left|\begin{pmatrix}
   a_t+a_{t+i} & b_t+b_{t+i} \\
   c_t+c_{t+i} & d_t+d_{t+i}
  \end{pmatrix}\right|^{\frac{1}{2}}}\cdot
\exp{\left(-\frac{1}{2}(\frac{i}{n})
\begin{pmatrix}
   a_t+a_{t+i} & b_t+b_{t+i} \\
   c_t+c_{t+i} & d_t+d_{t+i}
  \end{pmatrix}^{-1}(\frac{i}{n})^T\right)}\\
&=
\frac{1}{(2\pi)^2\sqrt{
   (a_t+a_{t+i})(d_t+d_{t+i})-(b_t+b_{t+i})(c_t+c_{t+i})}}\cdot\\
   &
\exp{\left(-\frac{1}{2(a_t+a_{t+i})(d_t+d_{t+i})-(b_t+b_{t+i})(c_t+c_{t+i})}(\frac{i}{n})
\begin{pmatrix}
   d_t+d_{t+i} & -b_t-b_{t+i} \\
   -c_t-c_{t+i} & a_t+a_{t+i}
  \end{pmatrix}(\frac{i}{n})^T\right)}.\\
\end{align}
For $\forall t\in \cT$ and $i\in \cM$, let
\begin{equation}
I_{t,i} = (a_t+a_{t+i})(d_t+d_{t+i})-(b_t+b_{t+i})(c_t+c_{t+i}),
\end{equation}
then
\begin{align} 
& C(X_{t}, X_{t+i})\\ 
&= 
\frac{1}{(2\pi)^2\sqrt{I_{t,i}}}\cdot
\exp{\left(-\frac{1}{2I_{t,i}}(\frac{i}{n})
\begin{pmatrix}
   d_t+d_{t+i} & -b_t-b_{t+i} \\
   -c_t-c_{t+i} & a_t+a_{t+i}
  \end{pmatrix}(\frac{i}{n})^T\right)}.
\end{align}
Also for $\forall t\in \cT$ and $i\in \cM$, let 
\begin{equation}
R_{t,i} = \frac{1}{(2\pi)^2\sqrt{I_{t,i}}}
\end{equation}
and 
\begin{equation}
Q_{t,i} = \exp{\left(-\frac{1}{2I_{t,i}}(\frac{i}{n})
\begin{pmatrix}
   d_t+d_{t+i} & -b_t-b_{t+i} \\
   -c_t-c_{t+i} & a_t+a_{t+i}
  \end{pmatrix}(\frac{i}{n})^T\right)},
\end{equation}
then
\begin{equation}
C(X_{t}, X_{t+i}) = R_{t,i}\cdot Q_{t,i}.
\end{equation}
Similarly, for any pixel $X_s \neq X_t$, 
\begin{equation}
C(X_{s}, X_{s+i}) = R_{s,i}\cdot Q_{s,i}.
\end{equation}
Next, we work on the bound of $|C(X_{t}, X_{t+i})-C(X_{s}, X_{s+i})|$. Since
\begin{equation}
|R_{t,i}-R_{s,i}| = \left|\frac{1}{(2\pi)^2\sqrt{I_{t,i}}}-\frac{1}{(2\pi)^2\sqrt{I_{s,i}}}\right|=O({\sf d}(s,t))
\end{equation}
and
\begin{equation}
|Q_{t,i}-Q_{s,i}| = Q_{t,i}\cdot O\Big(\frac{{\sf d}(s,t)}{n}\Big)= O\Big(\frac{{\sf d}(s,t)}{n}\Big), 
\end{equation}
we have 
\begin{align}
|C(X_{t}, X_{t+i})-C(X_{s}, X_{s+i})| &= |R_{t,i}\cdot Q_{t,i}-R_{s,i}\cdot Q_{s,i}|\\
& \leq  R_{t,i}\cdot |Q_{t,i}-Q_{s,i}| + Q_{s,i}\cdot
|R_{t,i}-R_{s,i}|\\
&\leq O\Big(\frac{{\sf d}(s,t)}{n}\Big)+ O({\sf d}(s,t))\\
&=  O({\sf d}(s,t)).
\end{align}
Then for any pixels $X_t, X_s$ in the image, for $\forall i \in \cM$,
\begin{equation}
|C_t(i)-C_s(i)|= O({\sf d}(s,t)).
\end{equation}
Thus,
\begin{equation}
\|C_t-C_s\|_\infty = O({\sf d}(s,t)).
\end{equation}
\end{proof}

\end{document}